\newtheoremstyle{mytheorem}{5pt plus 5pt minus 3pt}{4pt plus 3pt minus 1.5pt}
	{\itshape}{}{\bfseries}{.}{1ex plus 1ex minus .5ex}{}
\newtheoremstyle{mydef}{5pt plus 5pt minus 3pt}{4pt plus 3pt minus 1.5pt}
	{}{0pt}{\bfseries}{.}{1ex plus 1ex minus .5ex}{}
\newtheoremstyle{myremark}{5pt plus 5pt minus 3pt}{4pt plus 3pt minus 1.5pt}
	{}{0pt}{\itshape}{.}{1ex plus 1ex minus .5ex}{}
\theoremstyle{mytheorem}
\newtheorem{prop}{Proposition}[section]
\newtheorem{lemma}{Lemma}[section]
\newtheorem{theorem}{Theorem}[section]
\theoremstyle{mydef}
\newtheorem{dfn}[prop]{Definition}
\theoremstyle{myremark}
\newtheorem{rem}{Remark}
\newtheorem{cor}{Corollary}
\newcommand{\D}{\mathbf D}
\def\Acaption#1#2{\caption{#2}\vspace*{-#1}}
\definecolor{Mgreen}{RGB}{34,139,34}
\definecolor{blau}{rgb}{0.15,0.2,0.5}
\definecolor{gray}{rgb}{0.5,0.5,0.5}
\definecolor{drot}{rgb}{0.7,0,0.1}
\definecolor{gelb}{rgb}{.55,.40,.1}
\definecolor{magenta}{rgb}{1.,0.,1.}
\definecolor{cyan}{rgb}{0.,1.,1.}
\definecolor{green}{rgb}{0.,1.,0.}
\journal{JCAM}
\begin{document}

\begin{frontmatter}
\title{Explicit Gaussian quadrature rules for cubic splines with non-uniform knot sequences}
\author[VCC]{Rachid Ait-Haddou\corref{cor1}}
\ead{rachid.aithaddou@kaust.edu.sa}
\author[NumPor]{Michael Barto\v{n}}
\ead{Michael.Barton@kaust.edu.sa}
\author[NumPor]{Victor Manuel Calo}
\ead{Victor.Calo@kaust.edu.sa}

\cortext[cor1]{Corresponding author}

\address[VCC]{Visual Computing Center, King Abdullah University of Science and Technology, Thuwal 23955-6900, Kingdom of Saudi Arabia}
\address[NumPor]{Numerical Porous Media Center, King Abdullah University of Science and Technology, Thuwal 23955-6900, Kingdom of Saudi Arabia}

\begin{abstract}
We provide explicit expressions for quadrature rules
on the space of $C^1$ cubic splines with non-uniform, symmetrically stretched knot sequences.
The quadrature nodes and weights are derived via an explicit recursion
that avoids an intervention of any numerical solver and the rule is optimal, that is,
it requires minimal number of nodes. Numerical experiments validating
the theoretical results and the error estimates of the quadrature rules are also presented.
\end{abstract}

\begin{keyword}
Gaussian quadrature, cubic splines, Peano kernel, B-splines
\end{keyword}

\end{frontmatter}

\section{Introduction}\label{intro}
The problem of numerical quadrature has been of interest for decades due to its
wide applicability in many fields spanning collocation methods \cite{Sloan-1988},
integral equations \cite{Atkinson-1976}, finite elements methods \cite{Solin-2003}
and most recently, isogeometric analysis \cite{Cottrell-2009}. 
Computationally, the integration of a function
is an expensive procedure and quadrature turned out to be a cheap, robust and elegant alternative.

A quadrature rule, or shortly a quadrature, is said to be an \textit{$m$-point rule},
if $m$ evaluations of a function $f$ are needed to approximate its weighted integral
over an interval $[a,b]$
\begin{equation}\label{eq:GaussQuad}
\int_a^b \omega(x) f(x) \, \mathrm{d}x = \sum_{i=1}^{m} \omega_i f(\tau_i) + R_{m}(f),
\end{equation}
where $\omega$ is a fixed non-negative \emph{weight function} defined over $[a,b]$.
Typically, the rule is required to be \emph{exact}, that is, $R_m(f) \equiv 0$
for each element of a predefined linear function space $\mathcal{L}$. In the case
when $\mathcal{L}$ is the linear space of polynomials of degree at most $2m-1$,
then the $m$-point Gaussian quadrature rule \cite{Gautschi-1997} provides the \emph{optimal}
rule that is exact for each element of $\mathcal{L}$, i.e. $m$ is the minimal number
of \textit{nodes} at which $f$ has to be evaluated.
The Gaussian nodes are the roots of the orthogonal polynomial $\pi_{m}$ where
$(\pi_0,\pi_1,\ldots,\pi_m,\ldots )$ is the sequence of orthogonal polynomials with
respect to the measure $\mu(x) = \omega(x)dx$. Typically, the nodes of the
Gaussian quadrature rule
are computed numerically using for example, the Golub-Welsh algorithm \cite{Golub-1969},
in the case the three-term recurrence relations for the orthogonal polynomials can
be expressed.

In the case when $\mathcal{L}$ is a Chebyshev space of dimension $2m$, Studden and Karlin
proved the \emph{existence} and \emph{uniqueness} of optimal $m$-point generalized quadrature rules,
which due to optimality are also called Gaussian,
that are exact for each element of the space $\mathcal{L}$ \cite{Karlin-1966}.
The nodes and weights of the quadrature rule can be computed using numerical schemes
based on Newton methods \cite{Ma-1996}.

In the case when $\mathcal{L}$ is a linear
space of splines, a favourite alternative to polynomials
due to their approximation superiority and the inherent locality property
\cite{deBoor-1972,Elber-2001,Hoschek-2002-CAGD},
Micchelli and Pinkus \cite{Micchelli-1977} derived the optimal
number of quadrature nodes. Moreover, the range of intervals,
the knot sequence subintervals that contain at least one node, was specified.
Their formula preserves the ``double precision'' of Gaussian rules for polynomials, that is,
for a spline function with $r$ (simple) knots, asymptotically, the number
of nodes is $[\frac{r}{2}]$. Whereas the optimal quadrature rule is unique in
the polynomial case and the Chebyshev systems case, this is in general not true
for splines. The computation of the nodes and weights of the optimal spline
quadrature (Gaussian quadrature) is rather a challenging problem as
the non-linear systems the nodes and weights satisfy depend on truncated power
functions. The systems become algebraic only with the right guess of the
knot intervals where the nodes lie.


Regarding the optimal quadrature rules for splines, the quadrature schemes
differ depending on the mutual relation between the \emph{degree} and \emph{continuity} $(d,c)$.
For cases with lower continuity, a higher number of nodes is required for the optimal
quadrature rule.
Also, the choice of the domain can bring a significant simplification.
Whereas an exact quadrature rule -- when the weight function $\omega \equiv 1$ in Eq.~(\ref{eq:GaussQuad}) --
can be obtained by simply evaluating $f$ at every second knot (midpoint)
for uniform splines of even (odd) degree over \textit{a real line}
\cite{Hughes-2010}, a closed interval is an obstacle, even for uniform splines,
that can be resolved only by employing numerical solvers \cite{Hughes-2012}.

Thus, the insightful proposition of Nikolov \cite{Nikolov-1996}, which yield optimal
and explicit quadrature rules for $(3,1)$ \emph{uniform} splines (with $\omega \equiv 1$),
is surprising. In Nikolov's scheme, a recursive relation between
the neighboring nodes is derived and, since the resulting system is of cubical degree,
a closed form formula is given to iteratively compute the nodes and weights.

In this paper, we generalize the quadrature rules of \cite{Nikolov-1996}
for splines with certain \textit{non-uniform} knot sequences,
keeping the desired properties of explicitness, exactness and optimality.
The rest of the paper is organized as follows.
In Section~\ref{sec:quad}, we recall some basic properties of $(3,1)$ splines
and derive their Gaussian quadrature rules. In Section~\ref{sec:err}, the error estimates are given
and Section~\ref{sec:ex} shows the numerical experiments. Finally,
possible extensions of our method are discussed in Section~\ref{sec:con}.

\section{Gaussian quadrature formulae for $C^1$ cubic splines}\label{sec:quad}

 \begin{figure*}
 \hfill
 \begin{overpic}[width=.69\columnwidth,angle=0]{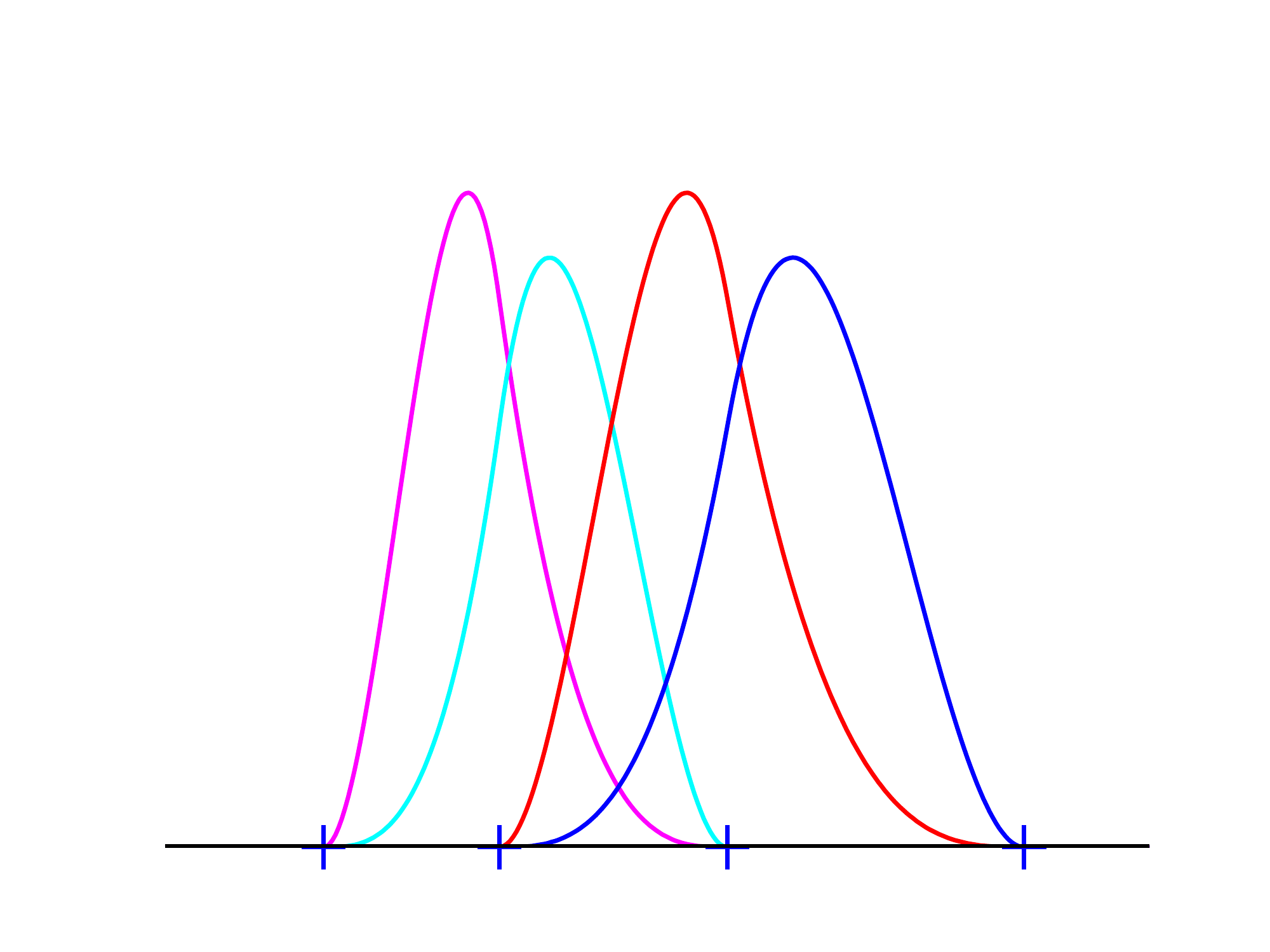}
    \put(20,3){$x_{k-2}$}
    \put(36,3){$x_{k-1}$}
    \put(55,3){$x_{k}$}
    \put(75,3){$x_{k+1}$}
    \put(20,40){\color{magenta}{$D_{2k-1}$}}
    \put(42,58){\color{cyan}{$D_{2k}$}}
  \end{overpic}
 \hfill
 \vspace{-4pt}
 \Acaption{1ex}{Four consecutive knots $x_{k-2}, \dots, x_{k+1}$ of a stretched knot sequence, each of multiplicity two. Four spline
 basis functions with non-zero support on $[x_{k-1},x_{k}]$ are displayed.}\label{fig:C1Splines}
 \end{figure*}

In this section we recall few basic properties of $(3,1)$ splines and derive
explicit formulae for computing quadrature nodes and weights for a particular family of knot sequences.
Throughout the paper, $\pi_{n}$ denotes the linear space of polynomials
of degree at most $n$ and $[a,b]$ is a non-trivial real compact interval.

\subsection{$C^1$ cubic splines with symmetrically stretched knot sequences}\label{ssec:spline}

We start with the definition of the particular knot sequences above which the spline spaces are built.

\begin{dfn}\label{def:stretch}
A finite sequence $\mathcal{X}_n = (a=x_0,x_1,...,x_{n-1},x_{n} = b)$ of pairwise
distinct real numbers in the interval $[a,b]$ is said to be a
{\bf{symmetrically stretched knot sequence}} if the sequence is symmetric
with respect to the midpoint of the interval $[a,b]$ and such that
\begin{equation}\label{strech}
 x_k - 2 x_{k+1} +  x_{k+2} \geq 0 \quad \textnormal{for}
\quad k=0,...,[\frac{n}{2}]-1.
\end{equation}
\end{dfn}

Denote by $S^{n}_{3,1}$ the linear space of $C^1$ cubic splines over a symmetrically stretched
knot sequence $\mathcal{X}_n =(a=x_0,x_1,...,x_n=b)$
\begin{equation}\label{eq:family}
S^{n}_{3,1} = \{ f\in C^{1}[a,b]: f|_{(x_k,x_{k+1})} \in \pi_3, k=0,...,n-1\}.
\end{equation}
The dimension of the space $S^{n}_{3,1}$ is $2n+2$.
\begin{rem}
In the B-spline literature \cite{deBoor-1972,Hoschek-2002-CAGD,Elber-2001}, the knot sequence is
usually written with knots' multiplicities. As in this paper the multiplicity is
always two at every knot, we omit the classical notation and, throughout the paper,
write $\mathcal{X}_n$ without multiplicity, i.e. $x_k<x_{k+1}$, $k=0,\dots, n-1$.
\end{rem}

Similarly to \cite{Nikolov-1996}, we find it convenient to work with the
non-normalized B-spline basis. To define the basis, we extend our knot sequence
$\mathcal{X}_n$ with two extra knots outside the interval $[a,b]$ that we set to be
\begin{equation}\label{boundary_condition}
x_{-1} = 2x_{0}-x_{1}
\quad \textnormal{and} \quad
x_{n+1} =2 x_{n} - x_{n-1}.
\end{equation}
Note that the choice of $x_{-1}$ and $x_{n+1}$ is to get particular integrals in (\ref{boundaryIntegral})
that simplify expressions in Section~\ref{ssec:quad}. We emphasize that this setting does not affect the quadrature
rule derived later in Theorem~\ref{theo:quad}.
Denote by $\D = \{D_i\}_i^{2n+2}$ the basis of $S^{n}_{3,1}$ where
\begin{equation*}
\begin{split}
&D_{2k-1}(t) = [x_{k-2},x_{k-2},x_{k-1},x_{k-1},x_k](. - t)_{+}^{3}   \\
&D_{2k}(t) = [x_{k-2},x_{k-1},x_{k-1},x_{k},x_k](. - t)_{+}^{3}, \\
\end{split}
\end{equation*}
where $[.]f$ stands for the divided difference and $u_{+} = \max(u,0)$ is the truncated
power function, see Fig.~\ref{fig:C1Splines}. Among the basic properties of the
basis $\D$, we need to recall the fact that for any $k=1,2,\ldots,n+1$, $D_{2k-1}$
and $D_{2k}$ have the same support, that is,
$\textnormal{supp}(D_{2k-1}) = \textnormal{supp}(D_{2k}) = [x_{k-2}, x_{k}]$, and
$D_{2k-1} (t) > 0$, $D_{2k}(t) > 0$ for all $t \in (x_{k-2}, x_{k})$.
Moreover, for $k=3,...,2n$, we have
\begin{equation}\label{interiorIntegral}
I[D_k] = \frac{1}{4}\; \textnormal{for} \quad k = 3,4,\ldots,2n,
\end{equation}
where $I[f]$ stands for the integral of $f$ over the interval $[a,b]$.
With the choice made in (\ref{boundary_condition}), we have
\begin{equation}\label{boundaryIntegral}
I[D_1] =  I[D_{2n+2}] = \frac{1}{16}
\quad \textnormal{and} \quad
I[D_2] =  I[D_{2n+1}]= \frac{3}{16}.
\end{equation}
Using the standard definition of divided difference for multiple knots,
explicit expressions for $D_{2k-1}(t)$
and $D_{2k}(t)$ with $t \in [x_{k-2},x_{k}]$ are obtained as
\begin{equation*}
D_{2k-1}(t) = a_{k} (x_k - t)_{+}^{3} + b_{k}  (x_{k-1} - t)_{+}^{3} + c_{k}  (x_{k-1} - t)_{+}^{2},
\end{equation*}
where, setting $h_k = x_{k} - x_{k-1}$ for $ k=0,1,\ldots,n+1$,
\begin{equation*}
 a_k = \frac{1}{h_k^{2} (h_k +h_{k-1})^{2}},\;
 b_k = \frac{2 h_{k} - h_{k-1}}{h_{k-1}^3 h_k^2},\;
c_k = \frac{-3}{h_{k-1}^2  h_{k}}.
\end{equation*}
Similarly, we obtain
\begin{equation*}
D_{2k}(t) = \alpha_k  (x_k - t)_{+}^{3} + \beta_k  (x_k - t)_{+}^{2} +
\gamma_k  (x_{k-1} - t)_{+}^{3} + \eta_k  (x_{k-1} - t)_{+}^{2},
\end{equation*}
where
\begin{equation*}
\alpha_k = \frac{-3 h_{k} - 2 h_{k-1}}{(h_k + h_{k-1})^2 h_k ^3},\;
\beta_k = \frac{3}{(h_k + h_{k-1}) h_k^2},\;
\gamma_k = \frac{2 h_{k-1} -  h_{k}}{h_{k-1}^2 h_{k}^3},\;
\eta_k =  \frac{3}{h_{k-1} h_{k}^2}.
\end{equation*}

That is, $D_{2k-1}$ and $D_{2k}$, are expressed by three parameters $x_{k-2}$, $x_{k-1}$ and $x_k$,
due to the fact that $[x_{k-2},x_k]$ is the maximal interval where both have a non-zero support,
see Fig.~\ref{fig:C1Splines}. Moreover, we have the following:

 \begin{figure*}
 \hfill
 \begin{overpic}[width=.69\columnwidth,angle=0]{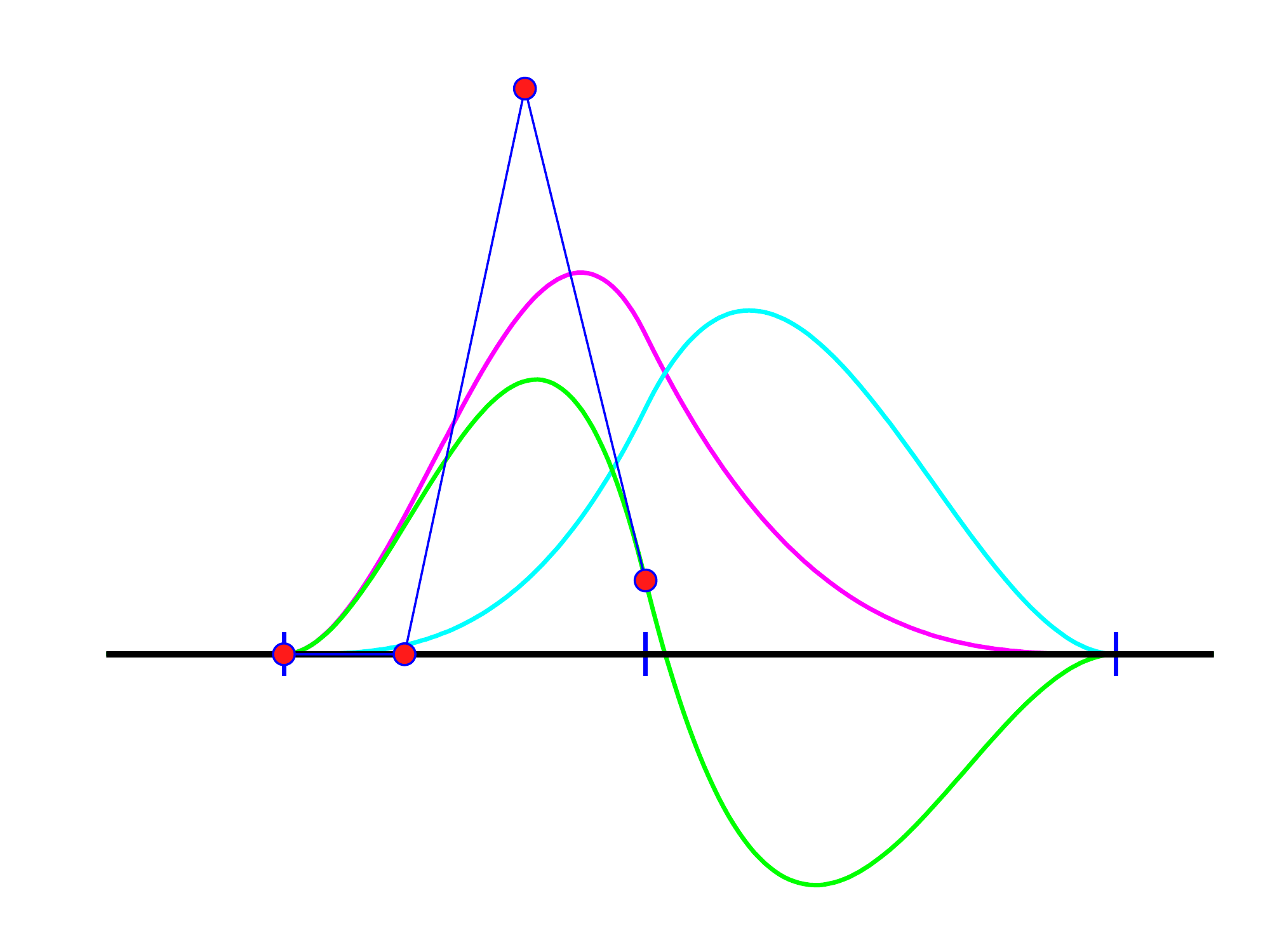}
    \put(18,18){$x_{k-2}$}
    \put(50,18){$x_{k-1}$}
    \put(90,18){$x_{k}$}
    \put(20,43){\color{magenta}{$D_{2k-1}$}}
    \put(65,50){\color{cyan}{$D_{2k}$}}
    \put(75,10){\color{green}{$Q$}}
    \put(43,67){$[\frac{x_{k-2}+2x_{k-1}}{3},q_2]$}
    \put(53,30){$[x_{k-1},q_3]$}
	\end{overpic}
 \hfill
 \vspace{-10pt}
 \Acaption{1ex}{The stretching property of the knot sequence, $x_k - x_{k-1} \geq x_{k-1} - x_{k-2}$, guarantees
 non-negativity of $D_{2k-1} - D_{2k}$ on $[x_{k-2}, x_{k-1}]$. Representing their difference, $Q$, as a B\'ezier curve
 on $[x_{k-2}, x_{k-1}]$, all its control points (red) have non-negative $y-$coordinates.}\label{fig:Lemma}
 \end{figure*}
\begin{lemma}\label{lemmaD}
Let $\mathcal{X}_{n} = (a=x_0,x_1,...,x_n=b)$ be a symmetrically stretched knot sequence.
Then for any $k = 2,...,[n/2]+1$
\begin{equation*}
D_{2k-1}(t) > D_{2k}(t)\quad  \textnormal{for any} \quad t \in  (x_{k-2},x_{k-1}).
\end{equation*}
\end{lemma}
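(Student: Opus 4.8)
The plan is to fix $k$ in the stated range, restrict attention to the interval $[x_{k-2},x_{k-1}]$, and prove that the cubic $Q:=D_{2k-1}-D_{2k}$ is positive on its interior by writing it in Bernstein--B\'ezier form and checking that its control ordinates are non-negative, exactly as suggested by Fig.~\ref{fig:Lemma}. On the open interval $(x_{k-2},x_{k-1})$ both arguments $x_{k-1}-t$ and $x_k-t$ are positive, so the truncated powers in the explicit formulas for $D_{2k-1}$ and $D_{2k}$ reduce to ordinary powers and $Q$ is a genuine cubic polynomial there. Expanding $Q(t)=\sum_{i=0}^{3}q_i B_i^3(t)$ over $[x_{k-2},x_{k-1}]$, with $B_i^3$ the cubic Bernstein polynomials and $h_{k-1}=x_{k-1}-x_{k-2}$, it then suffices to show $q_0,\dots,q_3\ge 0$ with $q_2>0$: since $B_2^3(t)>0$ throughout the open interval, a non-negative B\'ezier expansion with a strictly positive coefficient on $B_2^3$ forces $Q(t)>0$.

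First I would dispose of the two left control ordinates. Both $D_{2k-1}$ and $D_{2k}$ are $C^1$ functions supported on $[x_{k-2},x_k]$, so they and their first derivatives vanish at the left end $x_{k-2}$; hence $Q(x_{k-2})=0$ and $Q'(x_{k-2})=0$ (a direct substitution into the explicit derivative of $D_{2k-1}$ confirms this, the terms collapsing to zero once the coefficients $a_k,b_k,c_k$ are inserted). By the standard endpoint relations $q_0=Q(x_{k-2})$ and $q_1=q_0+\tfrac{h_{k-1}}{3}Q'(x_{k-2})$, this yields $q_0=q_1=0$, which is precisely why only the two rightmost control points are labelled in Fig.~\ref{fig:Lemma}.

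The bulk of the work is evaluating the remaining two ordinates at the right end $x_{k-1}$, where $x_{k-1}-t=0$ and $x_k-t=h_k$. Substituting these into the explicit expressions for $D_{2k-1},D_{2k}$ and their derivatives (a routine but somewhat lengthy reduction using $a_k,\dots,\eta_k$, in which $D_{2k-1}(x_{k-1})$ and $D_{2k}(x_{k-1})$ collapse to $h_k/(h_k+h_{k-1})^2$ and $h_{k-1}/(h_k+h_{k-1})^2$) I expect to obtain
\begin{equation*}
q_3=Q(x_{k-1})=\frac{h_k-h_{k-1}}{(h_k+h_{k-1})^2},\qquad
q_2=Q(x_{k-1})-\frac{h_{k-1}}{3}Q'(x_{k-1})=\frac{1}{h_k+h_{k-1}}.
\end{equation*}
This algebraic simplification is the only genuine obstacle; the rest is structural.

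Finally I would invoke the stretching hypothesis. Re-indexing (\ref{strech}) via $k\mapsto k-2$ shows that for every $k=2,\dots,[n/2]+1$ one has $x_k-2x_{k-1}+x_{k-2}\ge 0$, i.e.\ $h_k\ge h_{k-1}$, so that $q_3\ge 0$; the ordinate $q_2=1/(h_k+h_{k-1})$ is positive for all admissible spacings. Consequently $Q=q_2 B_2^3+q_3 B_3^3$ is a non-negative combination of Bernstein polynomials, and since $q_2>0$ with $B_2^3(t)>0$ and $q_3 B_3^3(t)\ge 0$ for every $t\in(x_{k-2},x_{k-1})$, we conclude $Q(t)>0$, that is $D_{2k-1}(t)>D_{2k}(t)$, on the whole open interval.
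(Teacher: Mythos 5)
Your proof is correct and takes essentially the same approach as the paper: both write $Q=D_{2k-1}-D_{2k}$ in Bernstein--B\'ezier form on $[x_{k-2},x_{k-1}]$ and reduce the claim to the non-negativity of the control ordinates, which the paper computes as $\left(0,0,\tfrac{1}{x_k-x_{k-2}},\tfrac{x_k-2x_{k-1}+x_{k-2}}{(x_k-x_{k-2})^2}\right)$ --- exactly your values $\left(0,0,\tfrac{1}{h_k+h_{k-1}},\tfrac{h_k-h_{k-1}}{(h_k+h_{k-1})^2}\right)$ --- with $q_2>0$ and $q_3\ge 0$ by the stretching condition. Your route to the ordinates (endpoint values and derivatives rather than direct expansion) and your explicit final positivity argument are only cosmetic differences.
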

\begin{proof}
Over the interval $(x_{k-2},x_{k-1})$,  the function $Q = D_{2k-1} - D_{2k}$ is a single
cubic polynomial. Therefore, it can be expressed in terms of Bernstein basis and can be
viewed as a B\'ezier curve on $(x_{k-2},x_{k-1})$, see Fig.~\ref{fig:Lemma},
$$
Q(t) = \sum_{i=0}^{3} q_i B_i^3(t), \; \textnormal{where} \;
B_i^3(t) = \binom{3}{i} \left(\frac{t-x_{k-2}}{x_{k-1} - x_{k-2}}\right)^i
\left(\frac{x_{k-1} - t}{x_{k-1} - x_{k-2}}\right)^{3-i}.
$$
Straightforward computations of the control points $(q_0,q_1,q_2,q_3)$ of $Q$
over the interval $[x_{k-2},x_{k-1}]$ leads to
$$
(q_0,q_1,q_2,q_3) = \left(0,0, \frac{1}{x_{k} - x_{k-2}},\frac{x_{k} -
2 x_{k-1} + x_{k-2}}{(x_{k} - x_{k-2})^2}\right).
$$
Therefore, according to (\ref{strech}), the control points are nonnegative,
with the third control point $q_2$ strictly positive. Therefore, $Q$ can only vanish
at $x_{k-2}$ and $x_{k-1}$ and is strictly positive over $(x_{k-2},x_{k-1})$.
\end{proof}

\subsection{Gaussian quadrature formulae}\label{ssec:quad}
In this section, we derive a quadrature rule for the family $S^{n}_{3,1}$, see (\ref{eq:family}),
and show it meets the three desired criteria, that is, the rule is optimal, exact and explicit.
With respect to exactness, according to \cite{Micchelli-1977,Micchelli-1972} there exists
a quadrature rule
\begin{equation}\label{quadrature}
I(f) = \int_{a}^{b} f(t) dt  \simeq I_{n+1}(f) = \sum_{i=1}^{n+1} \omega_i f(\tau_i)
\end{equation}
that is exact for every function $f$ from the space $S^{n}_{3,1}$.
The explicitness and optimality follow from the construction.

%
%
\begin{lemma}\label{lemmag1}
Let $\mathcal{X}_{n} = (a=x_0,x_1,...,x_n=b)$ be a symmetrically stretched knot sequence.
Each of the intervals $I_{k} = (x_{k-1},x_{k})\; (k=1,...,[n/2])$ contains at least one node of
the Gaussian quadrature rule (\ref{quadrature}).
\end{lemma}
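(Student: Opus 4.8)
The plan is to prove the statement by an induction on $k$ that marches in from the left endpoint, using only exactness of \eqref{quadrature} on $S^{n}_{3,1}$ together with two structural properties of the optimal (Gaussian) rule: that all weights are positive, $\omega_i>0$, and that all nodes are interior, $\tau_i\in(a,b)$. Both follow from the Micchelli--Pinkus theory \cite{Micchelli-1977,Micchelli-1972}, and I would quote them at the outset. The single mechanism driving every step is this: since the rule is exact on $S^{n}_{3,1}$, applying it to any fixed $g\in S^{n}_{3,1}$ gives $\sum_i\omega_i\,g(\tau_i)=I[g]$, so a \emph{nonnegative} $g$ with $I[g]>0$ must be strictly positive at some node, and the (small) support of $g$ then pins that node to a specific interval.

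For the base case $k=1$ I would test against $g=D_1$. Its support is $[x_{-1},x_1]$, so on $[a,b]=[x_0,b]$ it is supported on $\overline{I_1}$, strictly positive on $I_1=(x_0,x_1)$ and identically zero on $[x_1,b]$. Since $I[D_1]=\tfrac1{16}>0$ by \eqref{boundaryIntegral} and the weights are positive, some node satisfies $D_1(\tau_i)>0$; as the nodes are interior, such a node lies in $(x_0,x_1)=I_1$.

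For the inductive step I would use exactly the object already analysed in Lemma~\ref{lemmaD}, namely $Q_k=D_{2k-1}-D_{2k}$. It is supported on $[x_{k-2},x_k]$ and, by \eqref{interiorIntegral}, has $I[Q_k]=\tfrac14-\tfrac14=0$ for $2\le k\le n$, whence exactness gives $\sum_i\omega_i\,Q_k(\tau_i)=0$. Lemma~\ref{lemmaD} furnishes $Q_k>0$ on $(x_{k-2},x_{k-1})$ for $k\le[n/2]+1$, and by continuity $Q_k\ge 0$ on all of $[x_{k-2},x_{k-1}]$, while $Q_k\equiv 0$ outside $[x_{k-2},x_k]$. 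Hence, if the inductive hypothesis supplies a node in $I_{k-1}=(x_{k-2},x_{k-1})$, the contribution of the nodes lying in $[x_{k-2},x_{k-1}]$ to $\sum_i\omega_i\,Q_k(\tau_i)$ is strictly positive; for the total to vanish, some node must contribute negatively, and as $Q_k$ can be negative only on $(x_{k-1},x_k)$, that node lies in $I_k$. This yields the implication ``$I_{k-1}$ contains a node $\Rightarrow I_k$ contains a node'' for $k=2,\dots,[n/2]$ (both the range of Lemma~\ref{lemmaD} and that of \eqref{interiorIntegral} cover these $k$), and chaining it from the base case places a node in each of $I_1,\dots,I_{[n/2]}$.

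The sign-and-support bookkeeping here is routine; the genuine inputs are the two facts used silently above, positivity of the weights and interiority of the nodes, and securing these (rather than the induction itself) is the main obstacle. The remaining care is boundary bookkeeping: checking that the special values $I[D_1]=\tfrac1{16}$, $I[D_2]=\tfrac3{16}$ do not interfere with the base case, and that the range of validity of Lemma~\ref{lemmaD} and of the identity $I[D_m]=\tfrac14$ line up precisely with the indices $k=2,\dots,[n/2]$ used in the induction.
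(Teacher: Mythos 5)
Your proof is correct and takes essentially the same route as the paper: induction marching in from the left, the base case via exactness on $D_1$, and the inductive step via Lemma~\ref{lemmaD} applied to the pair $D_{2k-1}$, $D_{2k}$ --- your zero-integral formulation with $Q_k = D_{2k-1}-D_{2k}$ is just a rephrasing of the paper's strict-inequality contradiction $\tfrac14 = \sum_{\tau_j\in I_{k-1}}\omega_j D_{2k}(\tau_j) < \sum_{\tau_j\in I_{k-1}}\omega_j D_{2k-1}(\tau_j) = \tfrac14$. The only substantive difference is that you state explicitly (quoting Micchelli--Pinkus) the positivity of the weights and interiority of the nodes, which the paper uses silently.
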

\begin{proof}
We proceed by induction on the index of the segment $I_{k}$.
There must be a node of the Gaussian quadrature rule in the interval $I_{1}$,
otherwise, using the exactness of the quadrature rule for $D_{1}$, we obtain
$I(D_{1}) = 0$ which contradicts equalities (\ref{boundaryIntegral}).
Now, let us assume that every segment $I_{l}$ contains -- one or several -- Gaussian nodes
for $l=1,2,...,k-1$. If the interval $I_{k}$ has no Gaussian nodes,
then using Lemma \ref{lemmaD}, we arrive to the following contradiction
$$
\frac{1}{4} = I[D_{2k}] = \sum_{\tau_j \in I_{k-1}} \omega_j D_{2k}(\tau_{j}) <
 \sum_{\tau_j \in I_{k-1}} \omega_j D_{2k-1}(\tau_{j}) =  I[D_{2k-1}] = \frac{1}{4}.
$$
\end{proof}

\begin{cor}\label{corollary1}
If $n$ is an even integer, then each of the intervals $I_{k} = (x_{k-1},x_{k})$ $(k=1,2,\ldots,n)$
contains exactly one Gaussian node and the middle $x_{n/2} = (a+b)/2$ of the interval $[a,b]$
is also a Gaussian node. If $n$ is odd then each of the intervals $I_{k} = (x_{k-1},x_{k})$
$(k=1,2,\ldots,n; k\not= (n+1)/2)$ contain exactly one Gaussian node, while the interval
$I_{(n+1)/2}$ contains two Gaussian nodes, positioned symmetrically with respect to $(a+b)/2$.
\end{cor}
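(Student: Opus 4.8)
The plan is to combine three ingredients: the optimality bound, which says the rule (\ref{quadrature}) has exactly $n+1$ nodes; the interval containment of Lemma~\ref{lemmag1}; and the reflection symmetry forced by the symmetrically stretched sequence $\mathcal{X}_n$. First I would record that, since $\mathcal{X}_n$ is symmetric about $m=(a+b)/2$, both $S^n_{3,1}$ and the functional $I$ are invariant under $\sigma\colon t\mapsto a+b-t$, so that the reflected data $(\sigma(\tau_i),\omega_i)$ is again an exact $(n+1)$-point rule for $S^n_{3,1}$. The crux of the argument is then to conclude that the rule agrees with its reflection, i.e. that its nodes are symmetric about $m$. This is the step I expect to be the main obstacle, precisely because, as noted in the Introduction, optimal spline rules need not be unique; I would address it either by invoking uniqueness of the canonical (principal) representation in this stretched symmetric setting, or by observing that the explicit recursion of Theorem~\ref{theo:quad} yields a manifestly $\sigma$-symmetric node set.

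Before using symmetry of the \emph{rule}, I would upgrade Lemma~\ref{lemmag1} so that every one of the $n$ open intervals $I_1,\dots,I_n$ is known to contain at least one node, relying only on symmetry of the \emph{knots}. Lemma~\ref{lemmag1} already gives a node in each of $I_1,\dots,I_{[n/2]}$, and its inductive mechanism uses Lemma~\ref{lemmaD}, which by (\ref{strech}) is valid for $k$ up to $[n/2]+1$; for odd $n$ this already reaches the central interval $I_{(n+1)/2}$. For the right half I would prove the mirror of Lemma~\ref{lemmaD}: under $\sigma$ the basis functions $D_{2k-1}$ and $D_{2k}$ interchange, so on the compressing side one gets $D_{2k}>D_{2k-1}$ on the relevant subinterval. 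Running the induction of Lemma~\ref{lemmag1} inward from the right end, with base case $I[D_{2n+2}]=\tfrac{1}{16}\neq 0$ from (\ref{boundaryIntegral}) and the interior values $I[D_{2k+1}]=I[D_{2k+2}]=\tfrac{1}{4}$ from (\ref{interiorIntegral}), then places a node in each of $I_{[n/2]+1},\dots,I_n$. Thus all $n$ subintervals carry at least one node, using only knot symmetry and exactness.

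Finally I would close the count by parity, now using $\sigma$-symmetry of the rule. There are $n+1$ nodes and at least $n$ are distributed one per interval, so a single extra node remains to be located, and it must be fixed by $\sigma$. When $n$ is even, $n+1$ is odd and $m=x_{n/2}$ is itself a knot: the $n$ non-central nodes split into $n/2$ mirror pairs, forcing exactly one per open interval, while the unique self-symmetric node can only sit at $m$, which is therefore a node. When $n$ is odd, $n+1$ is even and $m$ lies interior to $I_{(n+1)/2}$; parity then forbids a lone node at $m$, so all nodes occur in $\sigma$-pairs, the $n-1$ side intervals carry one each, and the surviving pair lies in the self-symmetric interval $I_{(n+1)/2}$, positioned symmetrically about $m$. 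A short check that no interval can absorb two nodes (its mirror would have to do the same, overshooting the total $n+1$) rules out the remaining degenerate distributions and completes the argument.
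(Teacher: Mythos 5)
Your proposal follows essentially the same route as the paper: the printed proof of Corollary~\ref{corollary1} is exactly ``containment of at least one node in every interval'' plus a mirror-pair counting argument, with both steps justified there only by the words ``by symmetry''. Your parity bookkeeping (a unique $\sigma$-fixed node forced to lie at $(a+b)/2$ when $n+1$ is odd, no fixed node possible when $n+1$ is even, and the observation that an interval holding two nodes forces its mirror image to hold two as well, overshooting the total $n+1$) is a careful rendering of precisely that argument. Your two-sided induction --- mirroring Lemma~\ref{lemmaD} so the inequality reverses on the compressing half, then re-running Lemma~\ref{lemmag1} inward from the right end using $I[D_{2n+2}]=\tfrac{1}{16}$ from (\ref{boundaryIntegral}) --- is a clean way to establish the containment step from knot symmetry alone, which the paper leaves implicit.

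One warning about the step you yourself identify as the crux, the $\sigma$-invariance of the node set. Your second proposed fix --- reading symmetry off the recursion of Theorem~\ref{theo:quad} --- is circular and must be discarded: both the derivation of the starting values $\theta_1,\omega_1$ and the even/odd cases in that theorem's proof invoke Corollary~\ref{corollary1}, so the recursion cannot be used to prove it. Your first fix is the right one: the reflected data $(\sigma(\tau_i),\omega_i)$ is again an optimal exact rule for $S^n_{3,1}$, and uniqueness of the Gaussian rule for this spline class (knots of even multiplicity, an even-dimensional weak Chebyshev space, covered by the moment theory in \cite{Karlin-1966,Micchelli-1977}) forces it to coincide with the original rule. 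Note that the paper itself silently assumes exactly this rule symmetry in its counting step without justification, so your proposal, carried out via the uniqueness route with an explicit citation, is if anything more rigorous than the printed proof.
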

\begin{proof}
If $n$ is an even number then by symmetry, we obtain at least one Gaussian node
in each interval $I_{k}$ for $k=1,2,\ldots,n$. If one of the intervals $I_{k}$ has more than
one node then by symmetry, we get more than $n+2$ nodes for the quadrature, contradicting our
quadrature rule (\ref{quadrature}). Moreover,  by virtue of symmetry, the last missing Gaussian node
is forced to be the middle of the interval. Now, if $n$ is an odd integer, then by symmetry, each of the intervals
$I_{k}$, $k=1,2,\ldots,n$ contains at least one Gaussian node. Let us assume that the middle
interval $I_{(n+1)/2}$ contains exactly one node, then at least one of the remaining intervals
contains two nodes. By symmetry, the number of nodes will be at least $(n+2)$, contradicting our quadrature
rule (\ref{quadrature}). Therefore, the middle interval $I_{(n+1)/2}$ contains exactly two nodes while each
of the remaining intervals contain exactly one Gaussian node of the quadrature rule (\ref{quadrature}).
\end{proof}

Throughout the rest of this work, we use the following notation:  For $k=1,2,\ldots,[n/2]+1$, we set
\begin{equation}
\theta_k = x_k - \tau_k; \quad \rho_k = x_{k+1} - \tau_k \quad \textnormal{and}
\end{equation}
\begin{equation*}
\begin{split}
& A_k = \frac{1}{4} - \omega_k \left(a_{k+1} \rho_k^3 + b_{k+1} \theta_k^3 + c_{k+1} \theta_k^2 \right), \\
&B_k =  \frac{1}{4} - \omega_k \left(\alpha_{k+1} \rho_k^3 + \beta_{k+1} \rho_k^2 +
\gamma_{k+1} \theta_k^3 + \eta_{k+1} \theta_k^2 \right).
\end{split}
\end{equation*}
The explicit representation of the B-spline basis $D_i$ gives
\begin{equation}\label{equations_tau}
\begin{split}
& D_{2k-1}(\tau_k) = a_k \theta_k^3, \\
& D_{2k}(\tau_k) = \alpha_k \theta_k^3 + \beta_k \theta_k^2, \\
& D_{2k+1}(\tau_k) = a_{k+1} \rho_k^3 + b_{k+1} \theta_k^3 + c_{k+1}\theta_k^2, \\
& D_{2k+2}(\tau_k) = \alpha_{k+1} \rho_k^3 + \beta_{k+1} \rho_k^2 + \gamma_{k+1}\theta_k^3 + \eta_{k+1}\theta_k^2.
\end{split}
\end{equation}
We are ready now to proceed with the recursive algorithm which
starts at the domain's first subinterval $[x_0,x_1]$ by computing the first
node and weight, and sequentially parses the subintervals towards the domain's
midpoint, giving explicit formulae for the remaining unknowns $\tau_i$, $\omega_i$, $i=2,\dots,[n/2]+1$.
There is, according to Corollary \ref{corollary1}, a unique Gaussian node in the
interval $(x_0,x_1)$. This node is obtained by solving the system
\begin{equation*}
\begin{split}
& I[D_1] = \omega_1 D_1(\tau_1) = \frac{1}{16} = \omega_1 a_1 \theta_1^3, \\
& I[D_2] = \omega_1 D_2(\tau_1) = \frac{3}{16} = \omega_1 (\alpha_1 \theta_1^3 + \beta_1 \theta_1^2), \\
\end{split}
\end{equation*}
leading to the unique solution for $\theta_1$ and $\omega_1$ to be expressed as
\begin{equation*}
\theta_1 = \frac{\beta_1}{3 a_1 - \alpha_1} = \frac{3}{4}h_1
\quad \text{and} \quad
\omega_1 = \frac{1}{16 a_1 \theta_1^3}  = \frac{16}{27} h_1.
\end{equation*}
The remaining nodes and weights are computed in turn explicitly using the recipe
formalized as follows:

\begin{theorem}\label{theo:quad}
The sequence of nodes and  weights of the Gaussian quadrature rule (\ref{quadrature})
are given explicitly as $\theta_1 = \frac{3}{4}h_1,  \omega_1 =  \frac{16}{27} h_1$
and for $i =1,2,...,[n/2]-1$ by the recurrence relations
\begin{equation}\label{mainquadrature}
\theta_{i+1} = \frac{A_i \beta_{i+1}}{a_{i+1} B_i - \alpha_{i+1} A_i}
\quad \text{and} \quad
\omega_{i+1} = \frac{A_i}{a_{i+1} \theta_{i+1}^3}.
\end{equation}
If $n$  is even ($n=2m$) then $\tau_{m+1} = x_{m} = (a+b)/2$ and
\begin{equation}\label{even}
\omega_{m+1} = \frac{A_m + B_m - \frac{1}{4}}{a_{m+1} \theta_{m+1}^3}.
\end{equation}
If $n$ is odd ($n = 2m-1$) then $\theta_m$ is the greater root in $(0,x_m - x_{m-1})$
of the cubic equation
\begin{equation*}
\begin{split}
& \left( A_{m-1}(\alpha_m+b_{m+1}) - B_{m-1}(a_{m}+ \gamma_{m+1})\right) \theta_m^3 + \\
& \left( A_{m-1}(\beta_m+c_{m+1}) - B_{m-1}\eta_{m+1} \right) \theta_m^2 + \\
& (A_{m-1} a_{m+1} - B_{m-1} \alpha_{m+1})\rho_m^3 - B_{m-1} \beta_{m+1} \rho_m^2 =0,
\end{split}
\end{equation*}
and
\begin{equation*}
\omega_{m} =   \frac{A_{m-1}}{(\gamma_{m+1}+ a_m) \theta_m^3 +
\eta_{m+1} \theta_m^2 + \alpha_{m+1} \rho_{m}^3 + \beta_{m+1} \rho_m^2}.
\end{equation*}
\end{theorem}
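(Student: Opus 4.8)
The plan is to derive every relation in the statement from the single structural fact we are permitted to use, namely that the rule \eqref{quadrature} is exact on all of $S^{n}_{3,1}$, hence in particular on each basis spline $D_j$. The engine of the argument is locality: each $D_j$ is supported on only two knot intervals, so $I[D_j]=\sum_i\omega_i D_j(\tau_i)$ collapses to a sum over the one or two nodes lying in $\mathrm{supp}(D_j)$. Corollary~\ref{corollary1} tells us exactly how many nodes each interval carries, and the closed forms \eqref{equations_tau} evaluate the surviving terms. First I would record that the pair $(D_{2k+1},D_{2k+2})$, supported on $[x_{k-1},x_{k+1}]$, only sees $\tau_k\in I_k$ and $\tau_{k+1}\in I_{k+1}$.

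Next I would establish the interior recurrence. For $i=1,\dots,[n/2]-1$ both $I_i$ and $I_{i+1}$ sit in the one-node regime of Corollary~\ref{corollary1}, so exactness on $D_{2i+1}$ and $D_{2i+2}$ gives the $2\times2$ system
\begin{equation*}
\omega_i D_{2i+1}(\tau_i)+\omega_{i+1}a_{i+1}\theta_{i+1}^3=\tfrac14,\qquad
\omega_i D_{2i+2}(\tau_i)+\omega_{i+1}\bigl(\alpha_{i+1}\theta_{i+1}^3+\beta_{i+1}\theta_{i+1}^2\bigr)=\tfrac14,
\end{equation*}
where the evaluations at $\tau_{i+1}$ come from the first two lines of \eqref{equations_tau} (with $k=i+1$) and those at $\tau_i$ from the last two (with $k=i$). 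Since $\omega_i D_{2i+1}(\tau_i)=\tfrac14-A_i$ and $\omega_i D_{2i+2}(\tau_i)=\tfrac14-B_i$ by definition, the first equation becomes $\omega_{i+1}a_{i+1}\theta_{i+1}^3=A_i$, which is the claimed weight, and the second becomes $\omega_{i+1}(\alpha_{i+1}\theta_{i+1}^3+\beta_{i+1}\theta_{i+1}^2)=B_i$. Eliminating $\omega_{i+1}$ and solving the resulting linear equation in $1/\theta_{i+1}$ yields $\theta_{i+1}=A_i\beta_{i+1}/(a_{i+1}B_i-\alpha_{i+1}A_i)$, matching \eqref{mainquadrature}; with $\theta_1,\omega_1$ in hand, induction closes the interior.

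It remains to treat the central node(s), where the extra ingredient is the knot symmetry. For even $n=2m$ the midpoint $x_m$ is a node of weight $\omega_{m+1}$ with $\theta_{m+1}=h_{m+1}$, while the mirror image $2x_m-\tau_m$ of $\tau_m$ is the unique node of $I_{m+1}$, carrying weight $\omega_m$ by symmetry. The symmetry forces $D_{2m+2}(t)=D_{2m+1}(2x_m-t)$, so $D_{2m+1}(2x_m-\tau_m)=D_{2m+2}(\tau_m)$, and exactness on $D_{2m+1}$ reads $\omega_m\bigl(D_{2m+1}(\tau_m)+D_{2m+2}(\tau_m)\bigr)+\omega_{m+1}a_{m+1}\theta_{m+1}^3=\tfrac14$; isolating the midpoint term and substituting $\omega_m D_{2m+1}(\tau_m)=\tfrac14-A_m$, $\omega_m D_{2m+2}(\tau_m)=\tfrac14-B_m$ gives $\omega_{m+1}a_{m+1}\theta_{m+1}^3=A_m+B_m-\tfrac14$, i.e. \eqref{even}, using $D_{2m+1}(x_m)=a_{m+1}\theta_{m+1}^3$. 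For odd $n=2m-1$ the two symmetric nodes $\tau_m$ and $\tau_m'=x_{m-1}+x_m-\tau_m$ both lie in $I_m$ with common weight $\omega_m$, and the analogous reflections $D_{2m-1}(\tau_m')=D_{2m+2}(\tau_m)$, $D_{2m}(\tau_m')=D_{2m+1}(\tau_m)$ let me evaluate exactness on $D_{2m-1}$ and $D_{2m}$ entirely through \eqref{equations_tau}. Cancelling the known $\tau_{m-1}$-contributions as $\tfrac14-A_{m-1}$ and $\tfrac14-B_{m-1}$ produces two expressions for $\omega_m$ (one being the stated weight); equating them, clearing denominators, and writing $\rho_m=\theta_m+h_{m+1}$ yields exactly the displayed cubic in $\theta_m$, whose root in $(0,x_m-x_{m-1})$ fixes the left node.

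The main obstacle is the central-interval bookkeeping rather than the interior recurrence: I must verify the precise reflection identities $D_{2m+2}(t)=D_{2m+1}(2x_m-t)$ in the even case and $D_{2m-1}(x_{m-1}+x_m-t)=D_{2m+2}(t)$, $D_{2m}(x_{m-1}+x_m-t)=D_{2m+1}(t)$ in the odd case, which rest on the invariance of the multiple-knot divided differences of $(\cdot-t)_+^3$ under the knot symmetry, together with the equality of weights at mirror nodes coming from the symmetry of Corollary~\ref{corollary1}. I must also confirm that no node outside the claimed two or three enters the relevant supports, and that $\theta_{m+1}=h_{m+1}$ indeed makes $D_{2m+1}(x_m)=a_{m+1}\theta_{m+1}^3$. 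Establishing these symmetry relations cleanly is the only genuinely delicate step; everything else is the same linear-algebra elimination already used in the interior.
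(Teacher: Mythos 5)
Your proposal is correct and follows essentially the same route as the paper's proof: the identical elimination of $\omega_{i+1}$ from the exactness conditions on $D_{2i+1},D_{2i+2}$ (rewritten via $A_i,B_i$) for the interior recurrence, and the same symmetry-based bookkeeping for the central interval(s), including the reflection identities $D_{2m+1}(\tau_{m+2})=D_{2m+2}(\tau_m)$ in the even case and $D_{2m-1}(\tau_{m+1})=D_{2m+2}(\tau_m)$, $D_{2m}(\tau_{m+1})=D_{2m+1}(\tau_m)$ with equal mirror weights in the odd case. The only difference is presentational: you flag the reflection identities and the node-support accounting as steps to verify, whereas the paper asserts them directly from the symmetry established in Corollary~\ref{corollary1}.
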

\begin{proof}
The proof proceeds by induction. We assume $\theta_l, \omega_l$
known for $l=1,2,\ldots,k$ ( $k \leq [n/2]-2$). Using (\ref{equations_tau})
we compute $\theta_{k+1}$ and $\omega_{k+1}$ by solving the system
$ I[D_{2k+1}] = 1/4$ and $ I[D_{2k+2}] = 1/4$, that is
\begin{equation*}
\begin{split}
 \frac{1}{4} &= \omega_k D_{2k+1}(\tau_k) + \omega_{k+1} D_{2k+1}(\tau_{k+1})
 = (\frac{1}{4} - A_k) + \omega_{k+1} a_{k+1} \theta_{k+1}^3,  \\
\frac{1}{4}  & = \omega_k D_{2k+2}(\tau_k) + \omega_{k+1} D_{2k+2}(\tau_{k+1})
=  (\frac{1}{4} - B_k) + \omega_{k+1} (\alpha_{k+1} \theta_{k+1}^3  + \beta_{k+1} \theta_{k+1}^2).  \\
\end{split}
\end{equation*}
Eliminating $\omega_{k+1}$ leads to the recurrence relations (\ref{mainquadrature}).
If $n$ is even ($n = 2m$), then by Corollary \ref{corollary1} we have $\tau_{m+1} = (a+b)/2$.
To compute the associated weight $\omega_{m+1}$, we take into account the symmetry, which gives
$\omega_{m} = \omega_{m+2}$ and $D_{2m+1}(\tau_{m+2}) =D_{2m+2}(\tau_{m})$, and solve
\begin{equation*}
\frac{1}{4} = I[D_{2m+1}]  = \omega_m [D_{2m+1}(\tau_m) + D_{2m+2}(\tau_m)] +
\omega_{m+1} D_{2m+1}(\tau_{m+1}).
\end{equation*}
Using (\ref{equations_tau}), we obtain (\ref{even}). If $n$ is odd ($n = 2m-1$), then according to
Corollary~\ref{corollary1} the two nodes $\tau_{m}$ and $\tau_{m+1}$ belong to the interval $(x_{m-1},x_{m})$.
Due to the symmetry, we have $\omega_{m} = \omega_{m+1}$ and $\tau_{m+1} = (a+b)-\tau_{m}$ and
\begin{equation*}
D_{2m-1}(\tau_{m+1}) = D_{2m+2}(\tau_m), \quad  D_{2m}(\tau_{m+1}) = D_{2m+1}(\tau_m).
\end{equation*}
Using the exactness of the quadrature rule for $D_{2m-1}$ and $D_{2m}$, we obtain
\begin{equation*}
\begin{split}
& \omega_m a_m \theta_m^3 = A_{m-1} + B_{m} - \frac{1}{4} \\
& \omega_{m} (\alpha_m \theta_m^3 + \beta_m \theta_m^2) = A_{m} + B_{m-1} - \frac{1}{4}
\end{split}
\end{equation*}
Solving the above system for $\theta_m$ and $\omega_m$ proves the theorem.
\end{proof}

\section{Error estimation for the $C^1$ cubic splines quadrature rule}\label{sec:err}
In the previous section, we have derived a quadrature rule that exactly
integrates functions from $S^{n}_{3,1}$. If $f$ is not an element of $S^{n}_{3,1}$,
the rule produces a certain error, also called \emph{remainder}, and the analysis of this error
is the objective of this section.

Let $W_{1}^{r} = \{ f \in C^{r-1}[a,b]; \; f^{(r-1)} \textnormal{abs. cont.}, \;   ||f||_{L_1} < \infty \}$.
As the quadrature rule (\ref{quadrature}) is exact for polynomials of degree at most three,
for any element $f \in W_{1}^{d}$, $d \geq 4$, we have
\begin{equation*}
R_{n+1}[f] :=  I(f) - I_{n+1}(f) = \int_{a}^{b} K_4(R_{n+1};t) f^{(4)}(t) dt,
\end{equation*}
where the Peano kernel \cite{Gautschi-1997} is given by
\begin{equation*}
 K_4(R_{n+1};t) = R_{n+1} \left[ \frac{(t-.)_{+}^{3}}{3!} \right].
\end{equation*}
An explicit representation for the Peano kernel over the interval $[a,b]$ in terms of the
weights and nodes of the quadrature rule (\ref{quadrature})  is given by
\begin{equation}
 K_4(R_{n+1};t) = \frac{(t-a)^4}{24} - \frac{1}{6} \sum_{k=1}^{n+1} \omega_k (t - \tau_k)_{+}^{3}.
\end{equation}
Moreover, according to a general result for monosplines and quadrature rules \cite{Micchelli-1977}, the only zeros
of the Peano kernel over $(a,b)$ are the double knots of the cubic spline, see Section~\ref{sec:ex} in particular
Fig.~\ref{fig:PK} for an illustration. Therefore, for any $t \in (a,b)$,
$K_4(R_{n+1};t) \geq 0$ and, by the mean value theorem, there exists a real number $\xi \in [a,b]$ such that
\begin{equation}\label{remainder}
R_{n+1}(f) = c_{n+1,4}  f^{(4)}(\xi) \quad  \textnormal{with } \quad  c_{n+1,4} =  \int_{a}^{b} K_{4}(R_{n+1}; t) dt.
\end{equation}
Hence, the constant $c_{n+1,4}$ of the remainder $R_{n+1}$ is always positive and our quadrature rule belongs to the
family of positive definite quadratures of order $4$, e.g., see \cite{Nikolov-1996, Nikolov-1995, Schmeisser-1972}.
To compute the constant $c_{n+1,4}$, we can follow the approach of \cite{Nikolov-1996} by expressing the exactness
of our quadrature rule for the truncated powers $(x_k-t)_{+}^{2}, (x_k-t)_{+}^{3}; k=0,1,...,n $.
As the symmetric stretched knot sequences satisfy the assumptions of Theorem 2.2 in \cite{Nikolov-1996}, the proof applies straightforwardly
to our non-uniform setting, and the constant of the remainder is expressed as
\begin{theorem}
The error constant $c_{n+1,4}$ of the quadrature rule (\ref{quadrature}) is given by
\begin{equation}\label{eq:c}
c_{n+1,4} =  \frac{1}{720} \sum_{k=0}^{[(n+1)/2]} (x_{k+1}-x_{k})^5 -
\frac{1}{12} \sum_{k=1}^{[(n+1)/2]} \omega_k (x_{k-1} - \tau_{k})^2 (x_{k} - \tau_{k})^2.
\end{equation}
\end{theorem}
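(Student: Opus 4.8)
The plan is to compute the error constant directly from its integral representation $c_{n+1,4}=\int_a^b K_4(R_{n+1};t)\,\mathrm{d}t$, using the explicit Peano kernel
\[
K_4(R_{n+1};t)=\frac{(t-a)^4}{24}-\frac{1}{6}\sum_{k=1}^{n+1}\omega_k (t-\tau_k)_+^3,
\]
together with the exactness of the rule on the truncated powers $(x_j-\cdot)_+^2$ and $(x_j-\cdot)_+^3$. The crucial observation is that each such truncated power is a $C^1$ cubic spline on the knot sequence, hence lies in $S^n_{3,1}$, so $R_{n+1}$ annihilates it. Writing out $R_{n+1}[(x_j-\cdot)_+^3]=0$ and $R_{n+1}[(x_j-\cdot)_+^2]=0$ is exactly the statement that $K_4$ and $K_4'$ vanish at every interior knot $x_j$; equivalently, on the interval $[\tau_j,\tau_{j+1}]$, which by Corollary~\ref{corollary1} encloses the single knot $x_j$, the kernel coincides with one quartic polynomial $P_j$ having a double zero at $x_j$.

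First I would split $[a,b]$ at the quadrature nodes $\tau_1<\cdots<\tau_{n+1}$. On $[\tau_j,\tau_{j+1}]$ the kernel equals the single quartic $P_j$, whose leading coefficient is $\tfrac1{24}$ and which satisfies $P_j(x_j)=P_j'(x_j)=0$. Taylor-expanding $P_j$ about $x_j$ and integrating over the two halves $[\tau_j,x_j]$ and $[x_j,\tau_{j+1}]$ reduces each subinterval contribution to fifth powers of $x_j-\tau_j=\theta_j$ and $\tau_{j+1}-x_j$, with coefficients involving $\tfrac1{120}$ and the residual derivative values $P_j''(x_j)$, $P_j'''(x_j)$. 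These residual values obey simple one-step recursions coming from $P_j=P_{j-1}-\tfrac16\omega_j(\cdot-\tau_j)^3$, namely $P_j''=P_{j-1}''-\omega_j(\cdot-\tau_j)$ and $P_j'''=P_{j-1}'''-\omega_j$, so that when the per-interval contributions are summed the derivative-dependent terms telescope. What survives should reassemble, after using the symmetry $K_4(R_{n+1};a+b-t)=K_4(R_{n+1};t)$ to fold the computation onto the left half of $[a,b]$ (this is what produces the upper limits $[(n+1)/2]$), into the two sums $\tfrac1{720}\sum (x_{k+1}-x_k)^5$ and $-\tfrac1{12}\sum \omega_k (x_{k-1}-\tau_k)^2(x_k-\tau_k)^2$.

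Two boundary effects must be handled explicitly. At the left end $K_4(R_{n+1};t)=(t-a)^4/24$ on $[a,\tau_1]$, a quadruple zero at $a$, which furnishes the initial ($k=0$) fifth-power term and is consistent with the special integrals $I[D_1]=\tfrac1{16}$, $I[D_2]=\tfrac3{16}$ in~(\ref{boundaryIntegral}). At the center the two cases of Corollary~\ref{corollary1} require separate treatment: for even $n$ the node sits at $(a+b)/2$, for odd $n$ there are two central nodes symmetric about $(a+b)/2$; in each case one checks that the folding of the symmetric pieces closes up cleanly so that no extra correction term is needed and a single formula with limits $[(n+1)/2]$ results.

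The hard part will be the bookkeeping of this telescoping: verifying that the $P_j''(x_j)$ and $P_j'''(x_j)$ contributions, the $\tfrac1{120}$ fifth-power pieces, and the boundary quartic combine to give \emph{exactly} the coefficients $\tfrac1{720}$ and $\tfrac1{12}$ in~(\ref{eq:c}), with the central interval matched correctly under the symmetry fold. Since the manipulations are linear in the $\omega_k$ and polynomial in the $\theta_k$ and $h_k=x_k-x_{k-1}$, nothing in them uses uniformity of the knots. Consequently I would alternatively argue that the symmetrically stretched sequences satisfy the hypotheses of Theorem~2.2 in~\cite{Nikolov-1996}, so that Nikolov's computation transcribes verbatim once the constant uniform gap is replaced everywhere by $x_{k+1}-x_k$, which immediately yields~(\ref{eq:c}).
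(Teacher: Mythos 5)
Your raw ingredients and your closing fallback are, in essence, the paper's own proof: the paper likewise invokes exactness on the truncated powers $(x_j-\cdot)_+^2$, $(x_j-\cdot)_+^3$ and then asserts that the computation behind Theorem~2.2 of \cite{Nikolov-1996} carries over verbatim with the uniform gap replaced by $x_{k+1}-x_k$. Your kernel analysis is also correct as far as it goes: exactness on those truncated powers does give $K_4=K_4'=0$ at every knot, the kernel is piecewise quartic with leading coefficient $\frac{1}{24}$ and breakpoints only at the nodes, and $K_4$ is symmetric. Moreover, the computation you sketch can be done far more cleanly than by Taylor expansion and telescoping: on each knot interval set $\phi_k(t)=\frac{1}{24}(t-x_{k-1})^2(x_k-t)^2$, note $\phi_k^{(4)}\equiv 1$, and integrate $\int_{x_{k-1}}^{x_k}K_4\,\phi_k^{(4)}\,dt$ by parts four times; all boundary terms vanish (because $K_4=K_4'=0$ at the knots and $\phi_k=\phi_k'=0$ there), and only the jumps $-\omega_j$ of $K_4'''$ at the interior nodes survive, giving for every subinterval, with any number of nodes inside it,
\begin{equation*}
\int_{x_{k-1}}^{x_k} K_4(R_{n+1};t)\,dt \;=\; \frac{(x_k-x_{k-1})^5}{720}\;-\;\sum_{\tau_j\in(x_{k-1},x_k)}\frac{\omega_j}{24}\,(\tau_j-x_{k-1})^2\,(x_k-\tau_j)^2 ,
\end{equation*}
with no parity distinctions and no special treatment of the central interval needed.

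This, however, is exactly where your proposal breaks down, and the break is not repairable: the ``bookkeeping'' you defer cannot produce (\ref{eq:c}). Summing the identity above over all $n$ subintervals and folding by symmetry does reproduce the second sum of (\ref{eq:c}) with its coefficient $\frac{1}{12}$, but the first sum necessarily comes out as $\frac{1}{720}\sum_{k=1}^{n}(x_k-x_{k-1})^5$, i.e.\ over \emph{all} subintervals, and this coincides with the truncated sum $\frac{1}{720}\sum_{k=0}^{[(n+1)/2]}(x_{k+1}-x_k)^5$ written in (\ref{eq:c}) only for $n\le 3$. Concretely, take uniform knots $x_k=k/4$ on $[0,1]$ ($n=4$): Theorem~\ref{theo:quad} gives $\tau_1=\frac{1}{16}$, $\omega_1=\frac{4}{27}$, $\tau_2=\frac{95}{368}$, $\omega_2=\frac{48668}{213867}$, $\tau_3=\frac{1}{2}$, $\omega_3=\frac{1969}{7921}$ (plus mirrored data); direct integration of the Peano kernel yields $c_{5,4}=\frac{31}{8478720}\approx 3.66\cdot 10^{-6}$, whereas the right-hand side of (\ref{eq:c}) equals $\frac{13}{5652480}\approx 2.30\cdot 10^{-6}$, short by exactly one interval term $\frac{1}{720}\bigl(\frac{1}{4}\bigr)^5$. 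So the step you flag as ``the hard part'' would in fact fail, and your assertion that the pieces reassemble into exactly the two sums of (\ref{eq:c}) is false. The same defect afflicts your fallback (and the paper's one-sentence citation proof): a faithful transcription of Nikolov's uniform-case computation produces the all-subintervals sum, i.e.\ it proves the corrected statement with first sum $\frac{1}{720}\sum_{k=1}^{n}(x_k-x_{k-1})^5$, not the formula as printed. A sound write-up should prove the displayed identity and state the theorem in that corrected form.
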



\section{Numerical Experiments}\label{sec:ex}

We applied the quadrature rule to various symmetrically stretched knot sequences;
the nodes and weights computed by our formulae are summarized in Table~\ref{tabW}.
Even though the space of admissible stretched knot sequences is infinite dimensional,
for the sake simplicity, the proposed quadrature rule was tested on those
that are determined by the fewest possible number of parameters.

\begin{figure}[tbh!]
\hfill
 \begin{overpic}[width=.49\columnwidth,angle=0]{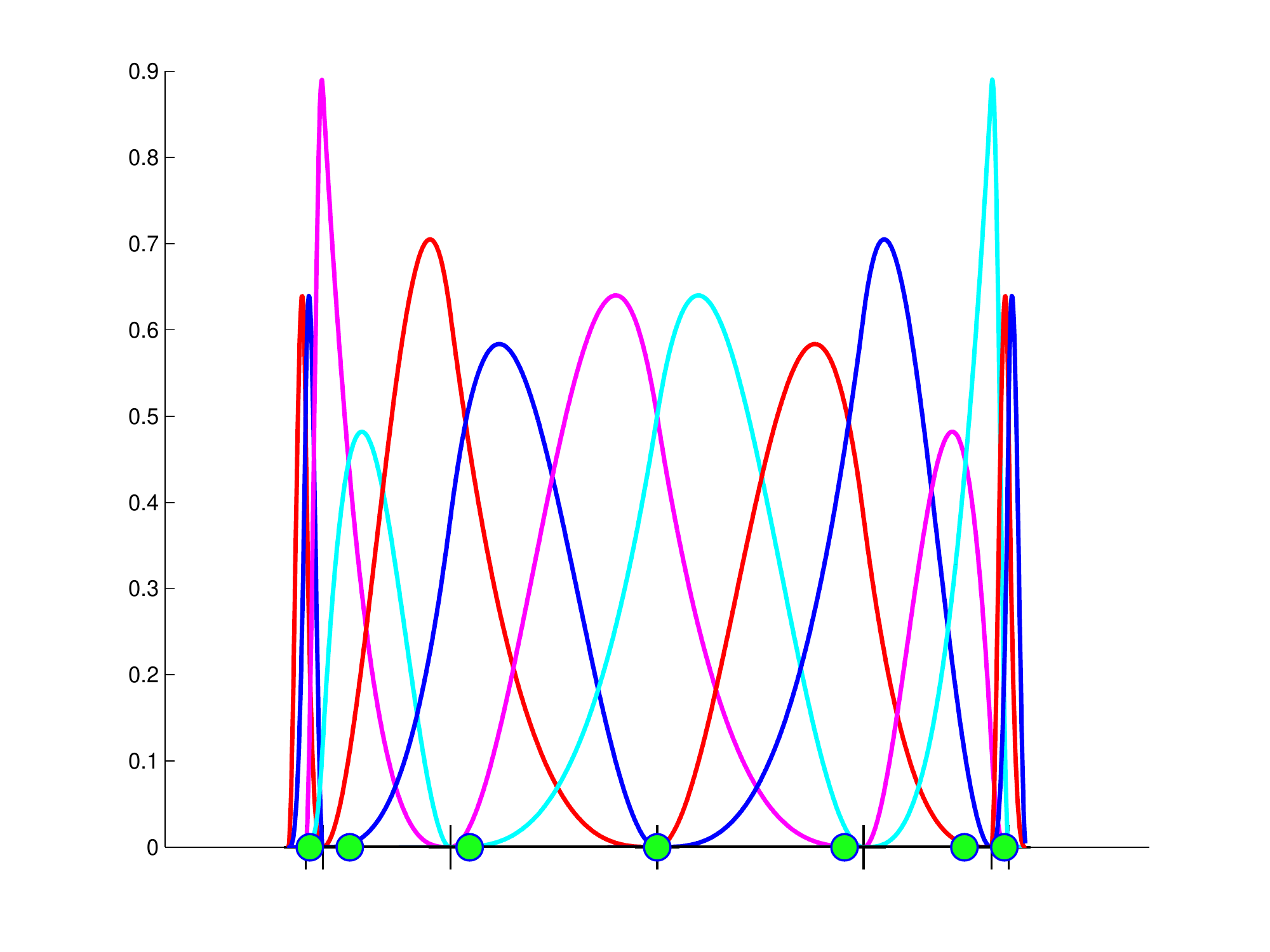}
    \put(10,2){$x_0=-1$}
    \put(45,2){$x_3=\tau_4$}
    \put(72,2){$\tau_{6}$}
    \put(80,2){$x_6=1$}
	\end{overpic}
 \hfill
 \begin{overpic}[width=.49\columnwidth,angle=0]{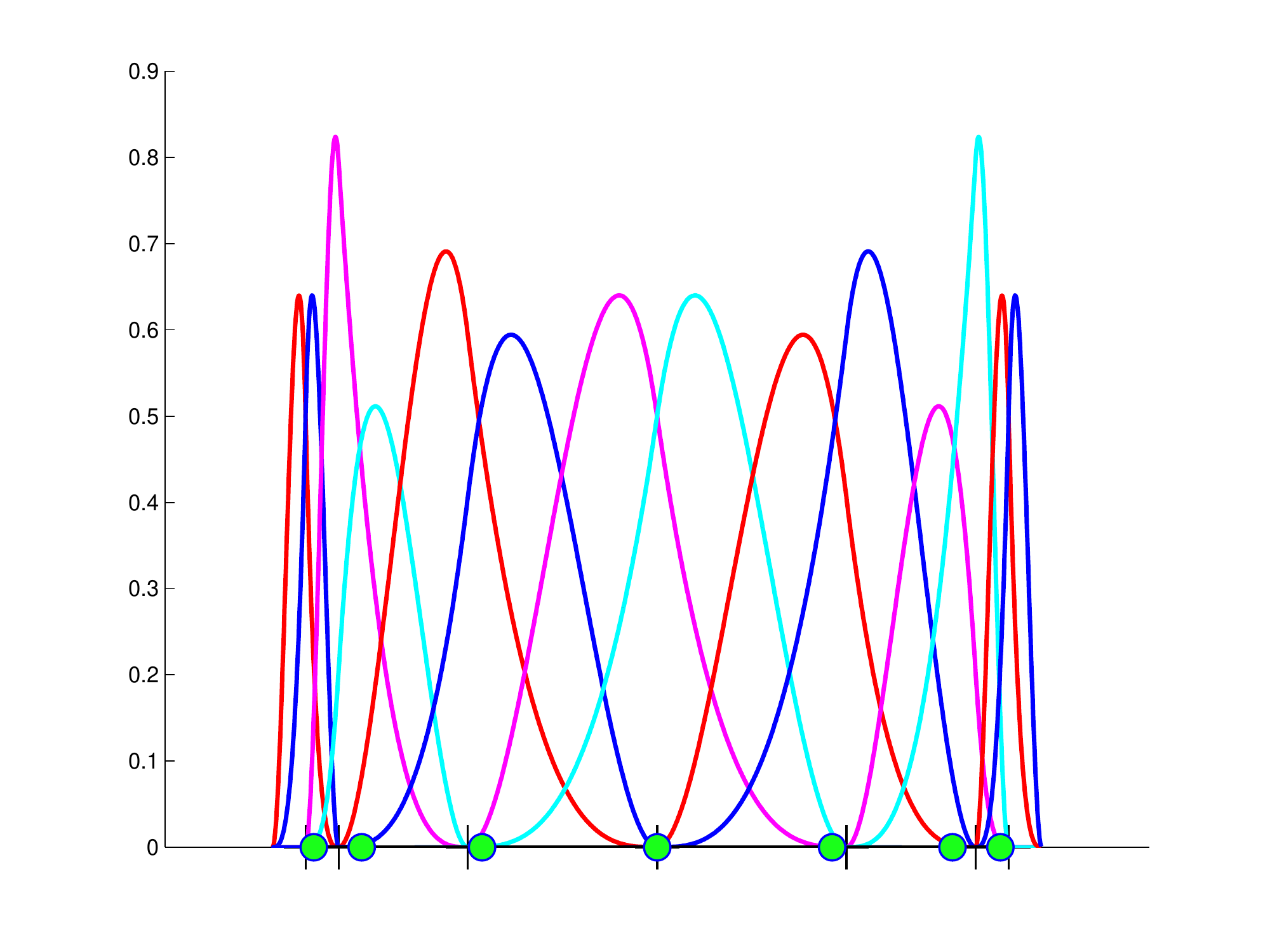}
    \put(15,2){$x_0=0$}
    \put(45,2){$x_3=\tau_4$}
    \put(72,2){$\tau_{6}$}
    \put(80,2){$x_6=1$}
	\end{overpic}
 \hfill
\vspace{-0.01cm}
 \hfill
 \begin{overpic}[width=.49\columnwidth,angle=0]{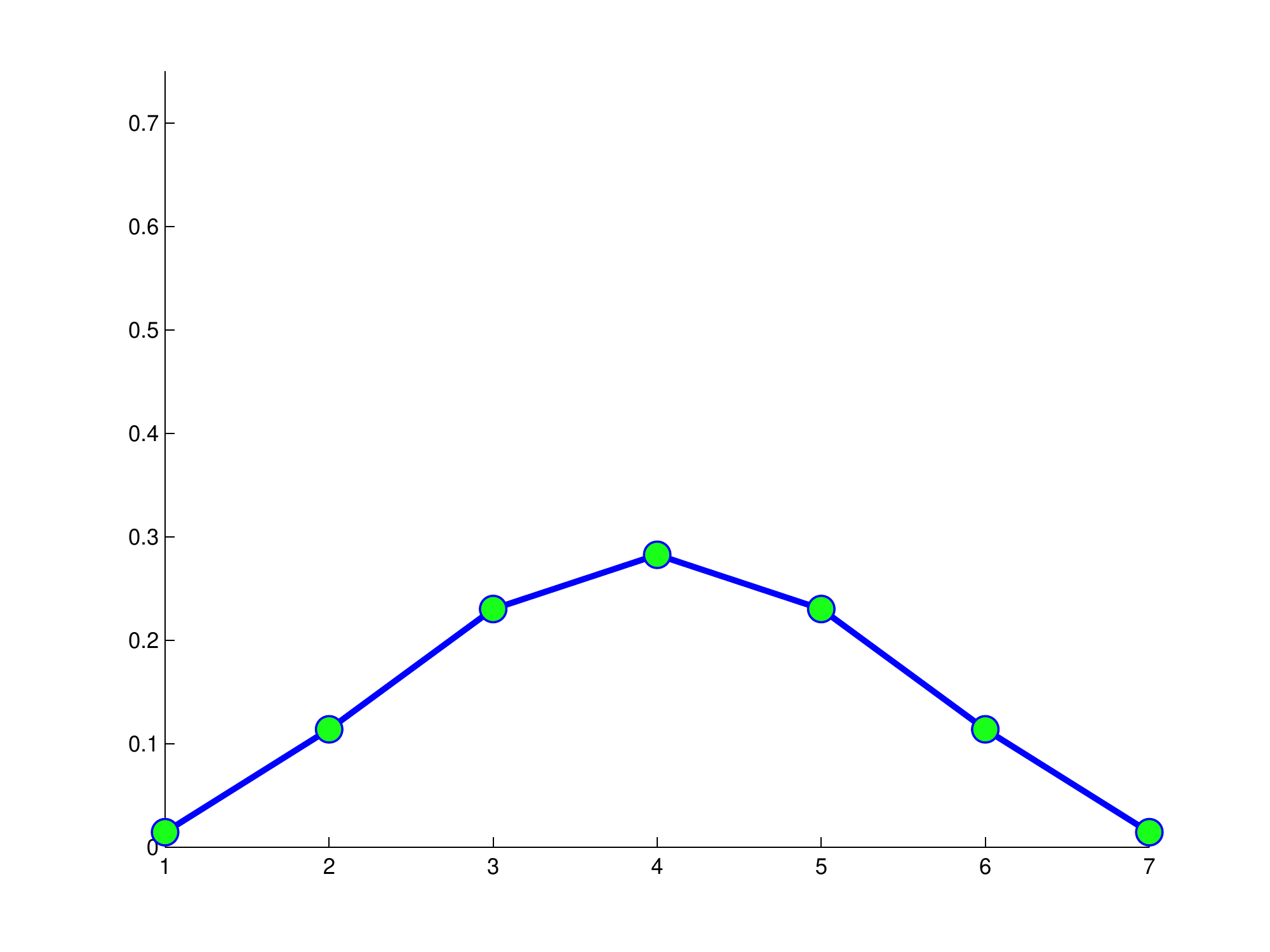}
    \put(38,21){$w_3$}
	\end{overpic}
 \hfill
 \begin{overpic}[width=.49\columnwidth,angle=0]{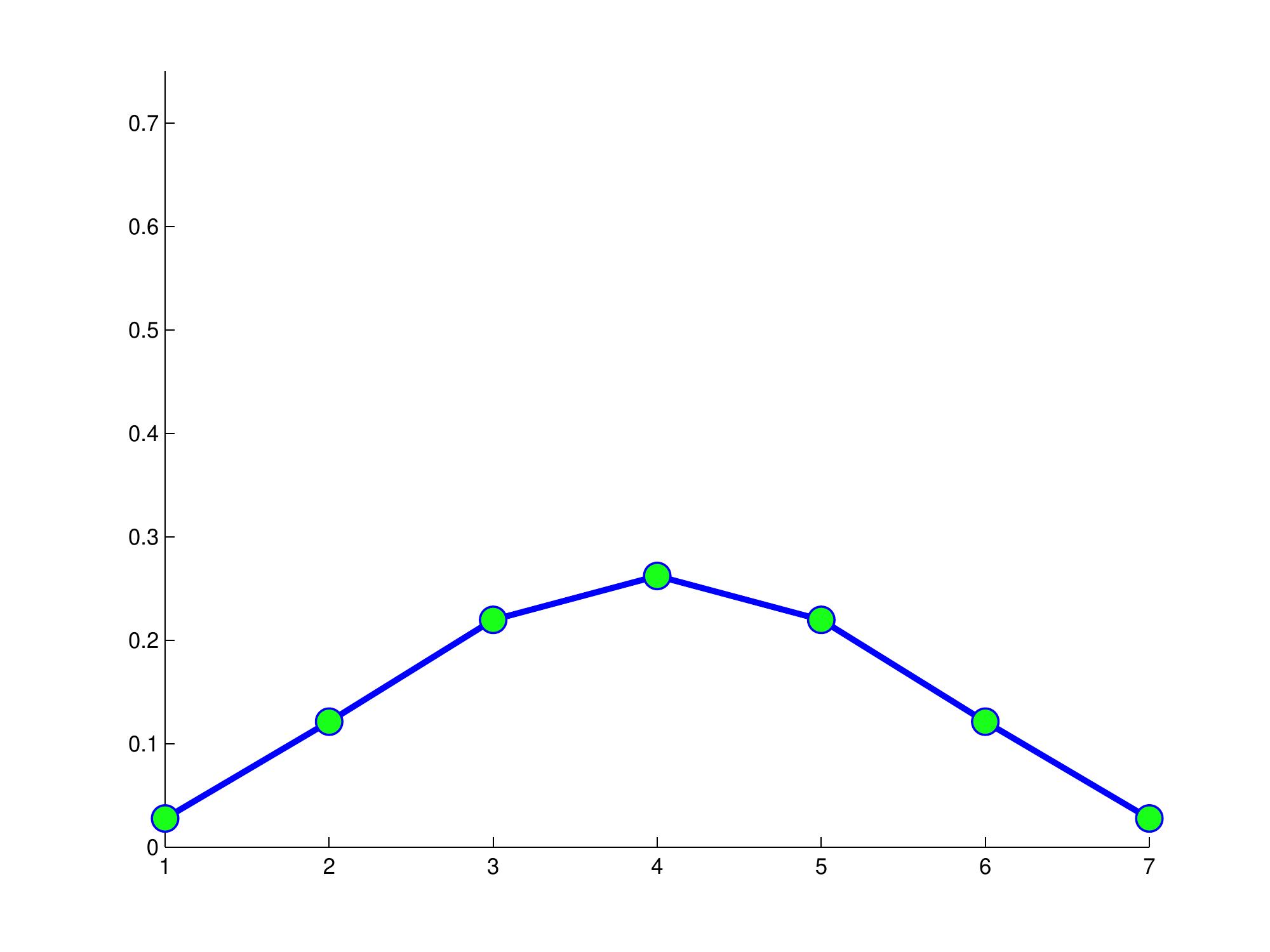}
 \put(38,21){$w_3$}
	\end{overpic}
 \hfill
 \vspace{-4pt}
\Acaption{1ex}{Top: Basis functions for the non-uniform knot sequences with five internal knots $\mathcal{X}_{6} = (x_0,x_1,...,x_6)$;
 each knot is of multiplicity two. Left: The internal knots are the roots of Chebyshev polynomial on $[-1,1]$ and Right:
 Legendre polynomial on [0,1]. The quadrature nodes $\tau_i$, $i=1,\dots, 7$ are shown in green.
 Bottom: The corresponding quadrature weights $\omega_i$, $i=1,\dots, 7$; in case of Chebyshev knots, the weights
 are normalized for the unit interval.}\label{fig:BasisChebyN=5}
 \end{figure}

   \begin{table}[!tbh]
 \begin{center}
  \begin{minipage}{0.9\textwidth}
\caption{Nodes and weights for particular knot sequences. $N$ denotes the number of internal knots. All the knots
and weights are normalized on unit interval and, due to the symmetry, only first $[\frac{N}{2}]+2$ nodes and weights are shown.}\label{tabW}
  \end{minipage}
\vspace{0.2cm}\\
\footnotesize{
\renewcommand{\arraystretch}{1.2}
\begin{tabular}{| c || r| r| r| r| r| r| r| r| r|}\hline
\rotatebox{0}{$N=5$}
 & \multicolumn{2}{c|}{Chebyshev}
 & \multicolumn{2}{c|}{Legendre}
 & \multicolumn{2}{c|}{Geometric $q=2$}\\
 $i$ & $\tau_i$ & $\omega_i$ & $\tau_i$ & $\omega_i$ & $\tau_i$ & $\omega_i$
\\\hline\hline
 1 & 0.006118 & 0.014502 & 0.011728 & 0.027799 & 0.017857 & 0.042328 \\\hline
 2 & 0.062790 & 0.113850 & 0.079882 & 0.121347 & 0.088993 & 0.104896 \\\hline
 3 & 0.233416 & 0.230297 & 0.251054 & 0.219793 & 0.244959 & 0.216881 \\\hline
 4 & 0.500000 & 0.282701 & 0.500000 & 0.262122 & 0.500000 & 0.271790 \\\hline\hline
 $N=6$ & \multicolumn{2}{c|}{} & \multicolumn{2}{c|}{} & \multicolumn{2}{c|}{}\\
 1 & 0.004259 & 0.010096 & 0.008441 & 0.020009 & 0.008333 & 0.019753 \\\hline
 2 & 0.044447 & 0.081009 & 0.058300 & 0.089278 & 0.041530 & 0.048952 \\\hline
 3 & 0.169161 & 0.172365 & 0.187089 & 0.169114 & 0.114314 & 0.101211 \\\hline
 4 & 0.378223 & 0.236530 & 0.386490 & 0.221598 & 0.312967 & 0.330084 \\\hline\hline
 $N=7$ & \multicolumn{2}{c|}{} & \multicolumn{2}{c|}{} & \multicolumn{2}{c|}{}\\
 1 & 0.003134 & 0.007429 & 0.006362 & 0.015079 & 0.008333 & 0.019753 \\\hline
 2 & 0.033034 & 0.060392 & 0.044320 & 0.068207 & 0.041530 & 0.048952 \\\hline
 3 & 0.127538 & 0.132404 & 0.144115 & 0.132816 & 0.114314 & 0.101211 \\\hline
 4 & 0.292314 & 0.192325 & 0.304385 & 0.183131 & 0.261560 & 0.203096 \\\hline
 5 & 0.500000 & 0.214901 & 0.500000 & 0.201532 & 0.500000 & 0.253977 \\\hline\hline
  $N=8$ & \multicolumn{2}{c|}{} & \multicolumn{2}{c|}{} & \multicolumn{2}{c|}{}\\
 1 & 0.002402 & 0.005693 & 0.004964 & 0.011766 & 0.004032 & 0.009558 \\\hline
 2 & 0.025481 & 0.046676 & 0.034784 & 0.053707 & 0.020095 & 0.023686 \\\hline
 3 & 0.099304 & 0.104319 & 0.114113 & 0.106506 & 0.055313 & 0.048973 \\\hline
 4 & 0.231216 & 0.156780 & 0.244557 & 0.151589 & 0.126561 & 0.098272 \\\hline
 4 & 0.405347 & 0.186531 & 0.410645 & 0.176432 & 0.318965 & 0.319511 \\\hline\hline
 $N=9$ & \multicolumn{2}{c|}{} & \multicolumn{2}{c|}{} & \multicolumn{2}{c|}{}\\
 1 & 0.001899 & 0.004501 & 0.003980 & 0.009434 & 0.004032 & 0.009558 \\\hline
 2 & 0.020237 & 0.037119 & 0.028004 & 0.043337 & 0.020095 & 0.023686 \\\hline
 3 & 0.079375 & 0.084052 & 0.092445 & 0.087039 & 0.055313 & 0.048973 \\\hline
 4 & 0.186823 & 0.129241 & 0.200155 & 0.126607 & 0.126561 & 0.098272 \\\hline
 5 & 0.332973 & 0.159838 & 0.341205 & 0.152710 & 0.269215 & 0.196605 \\\hline
 6 & 0.500000 & 0.170498 & 0.500000 & 0.161745 & 0.500000 & 0.245812 \\\hline
\end{tabular}
}
\end{center}
\end{table}

 \begin{figure}[!tbh]
 \hfill
 \begin{overpic}[width=.49\columnwidth,angle=0]{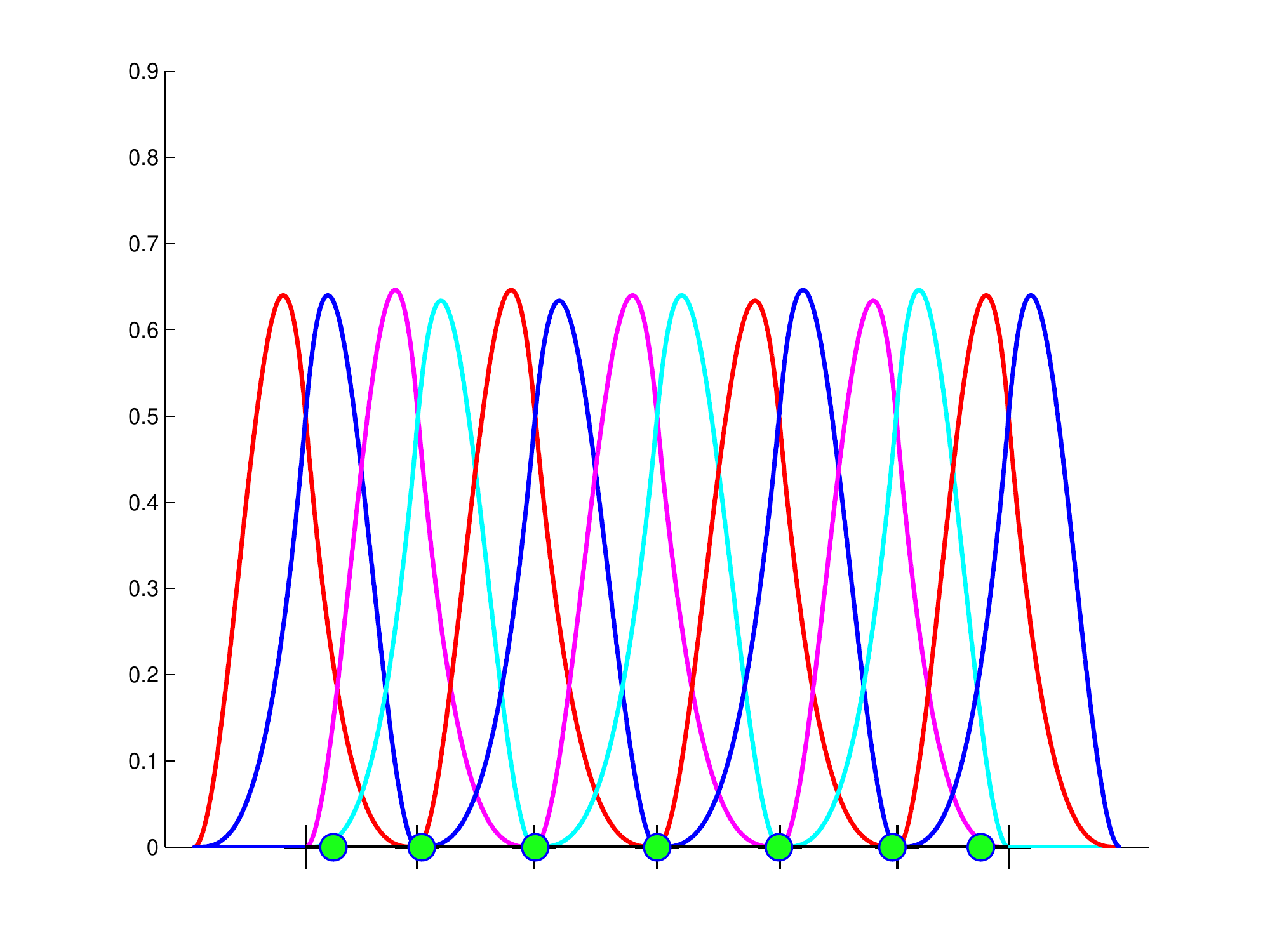}
 \put(38,55){\fcolorbox{gray}{white}{\includegraphics[width=0.115\textwidth]{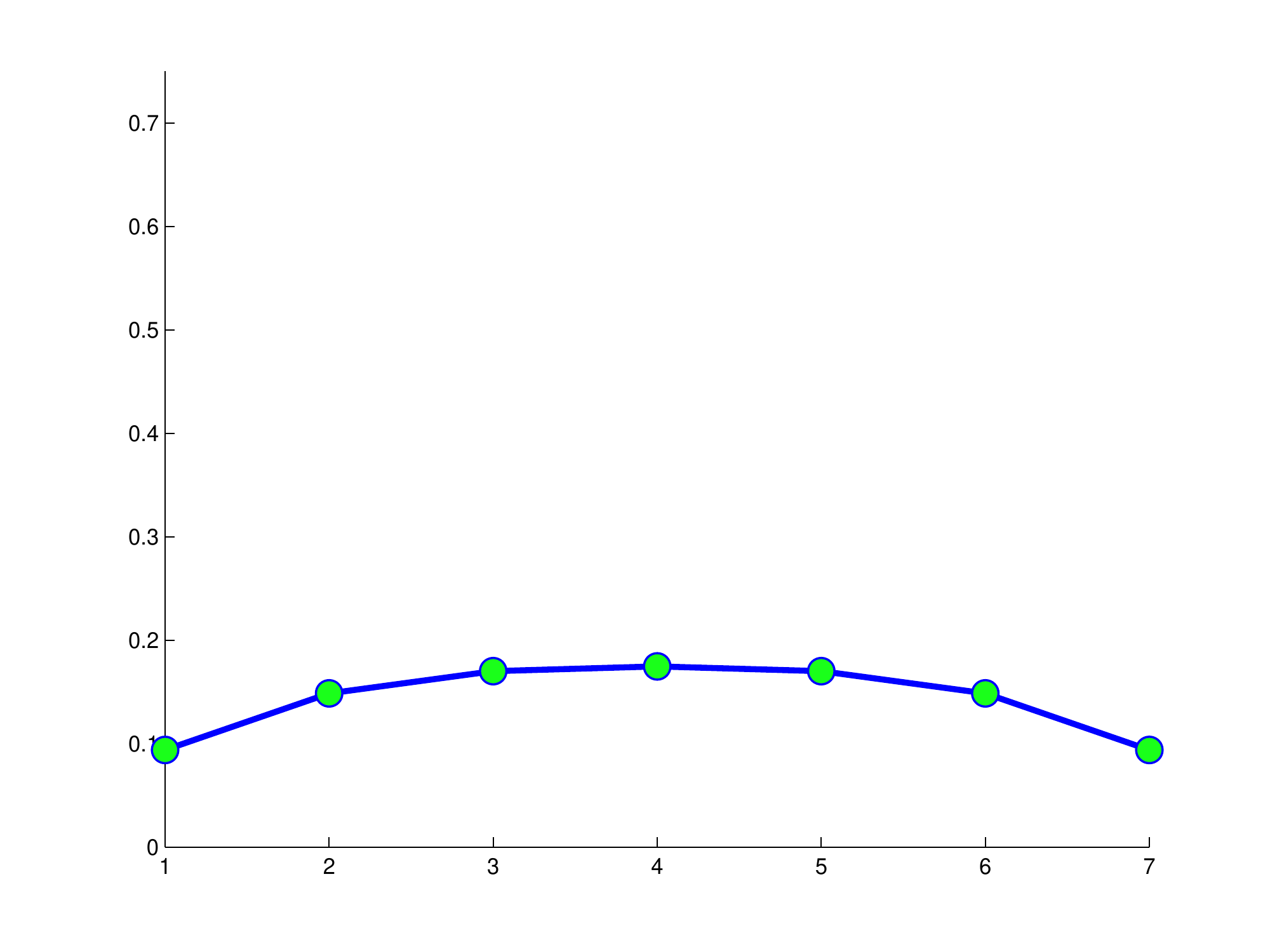}}}
    \put(15,65){$q=1.05$}
    \put(20,3){$x_0$}
    \put(30,3){$x_1$}
    \put(40,3){$x_2$}
	\end{overpic}
 \hfill
 \begin{overpic}[width=.49\columnwidth,angle=0]{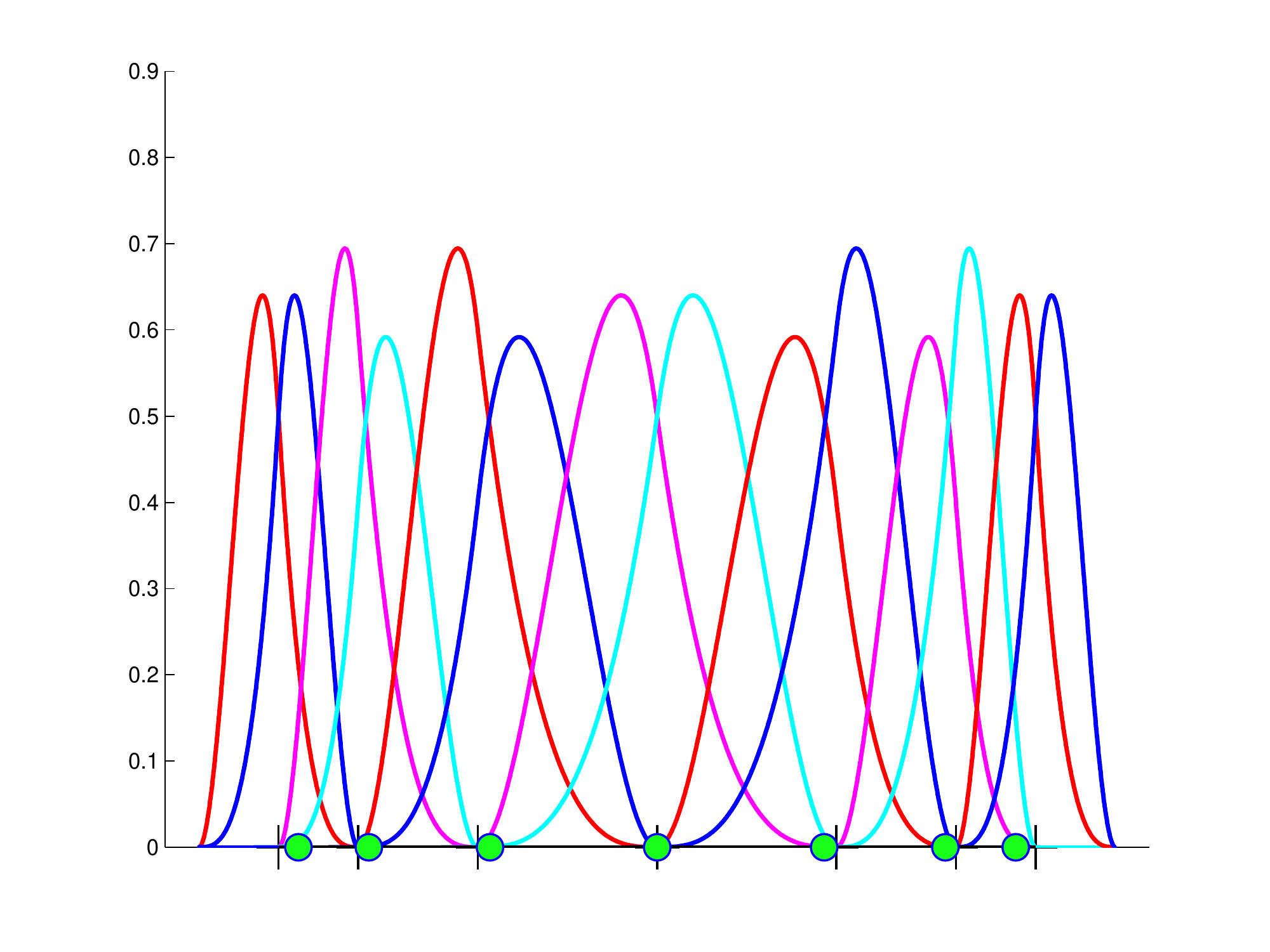}
    \put(38,55){\fcolorbox{gray}{white}{\includegraphics[width=0.115\textwidth]{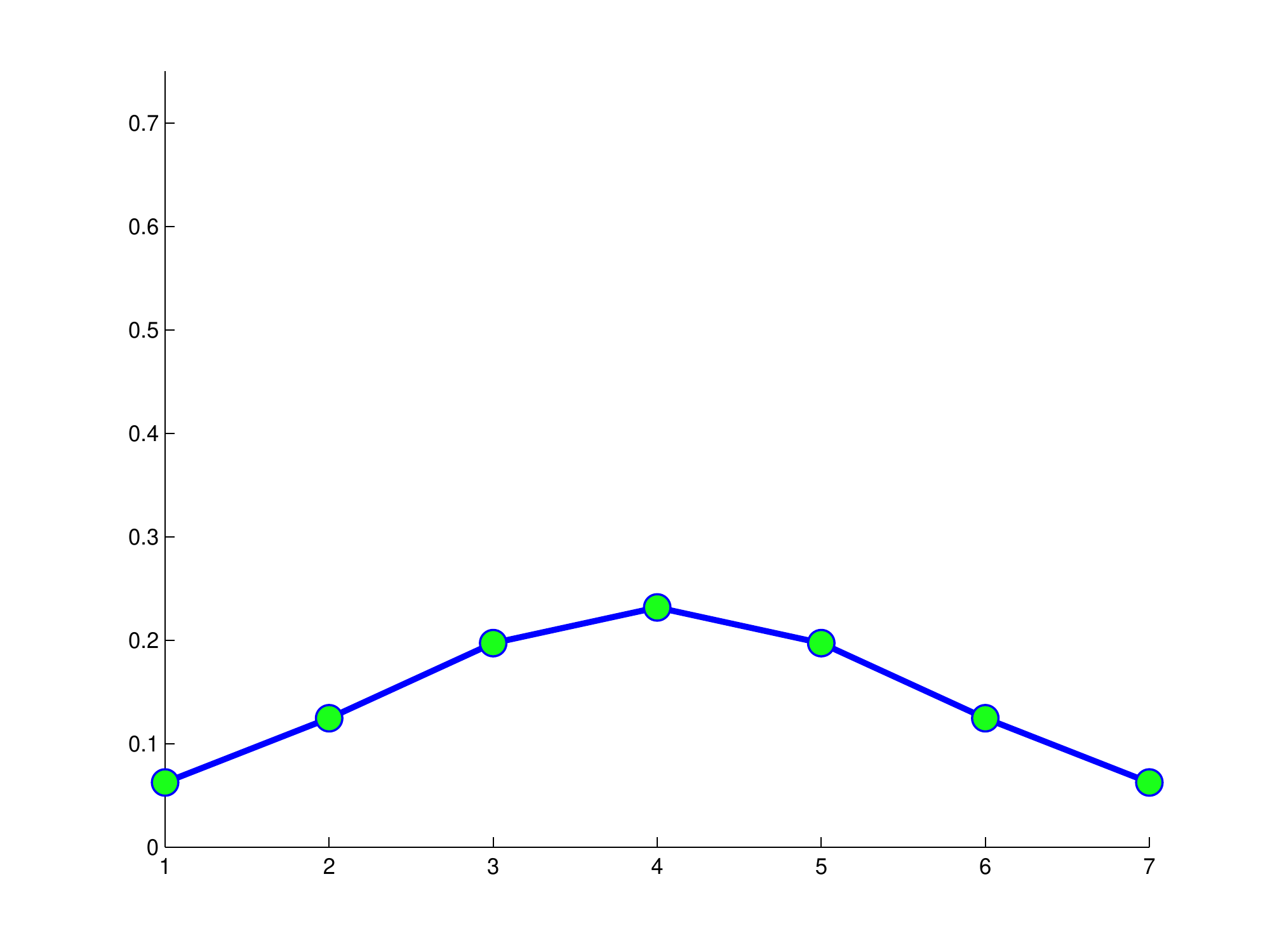}}}
    \put(15,65){$q=1.5$}
    \put(18,3){$x_0$}
    \put(26,3){$x_1$}
    \put(34,3){$x_2$}
	\end{overpic}
 \hfill
 \vspace{-0.01cm}
 \hfill
 \begin{overpic}[width=.49\columnwidth,angle=0]{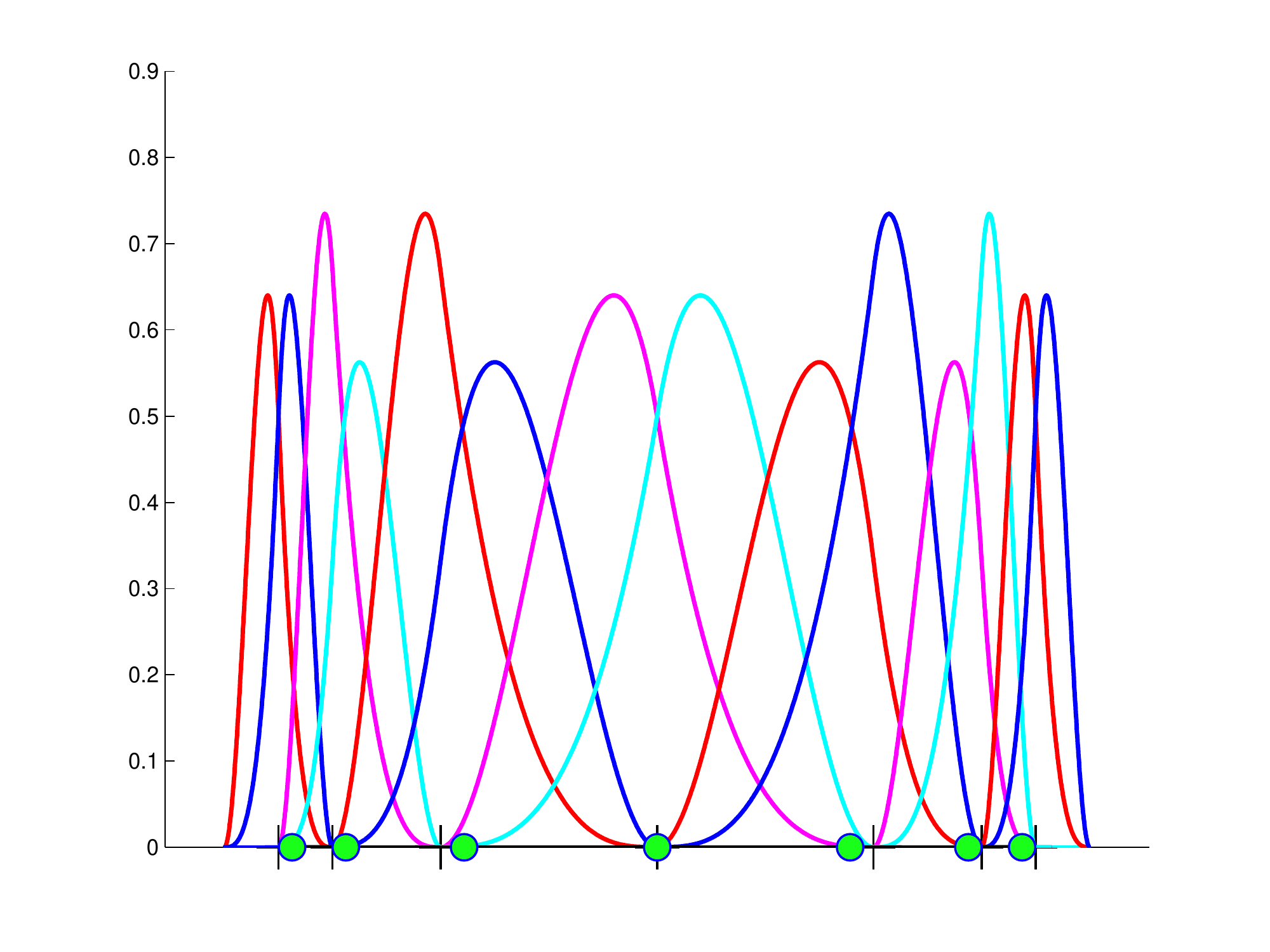}
 \put(38,55){\fcolorbox{gray}{white}{\includegraphics[width=0.115\textwidth]{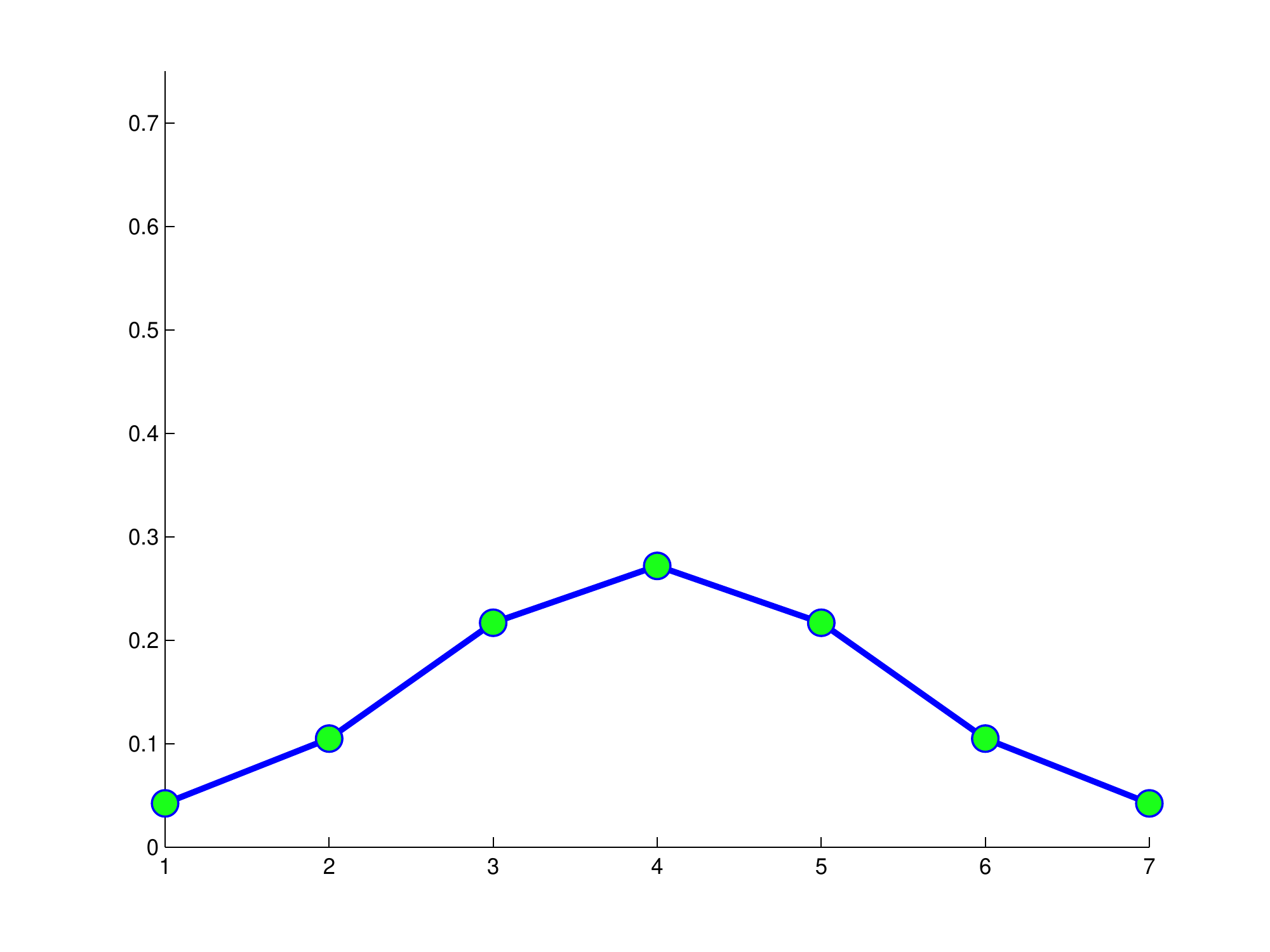}}}
 \put(15,65){$q=2$}
	\end{overpic}
 \hfill
 \begin{overpic}[width=.49\columnwidth,angle=0]{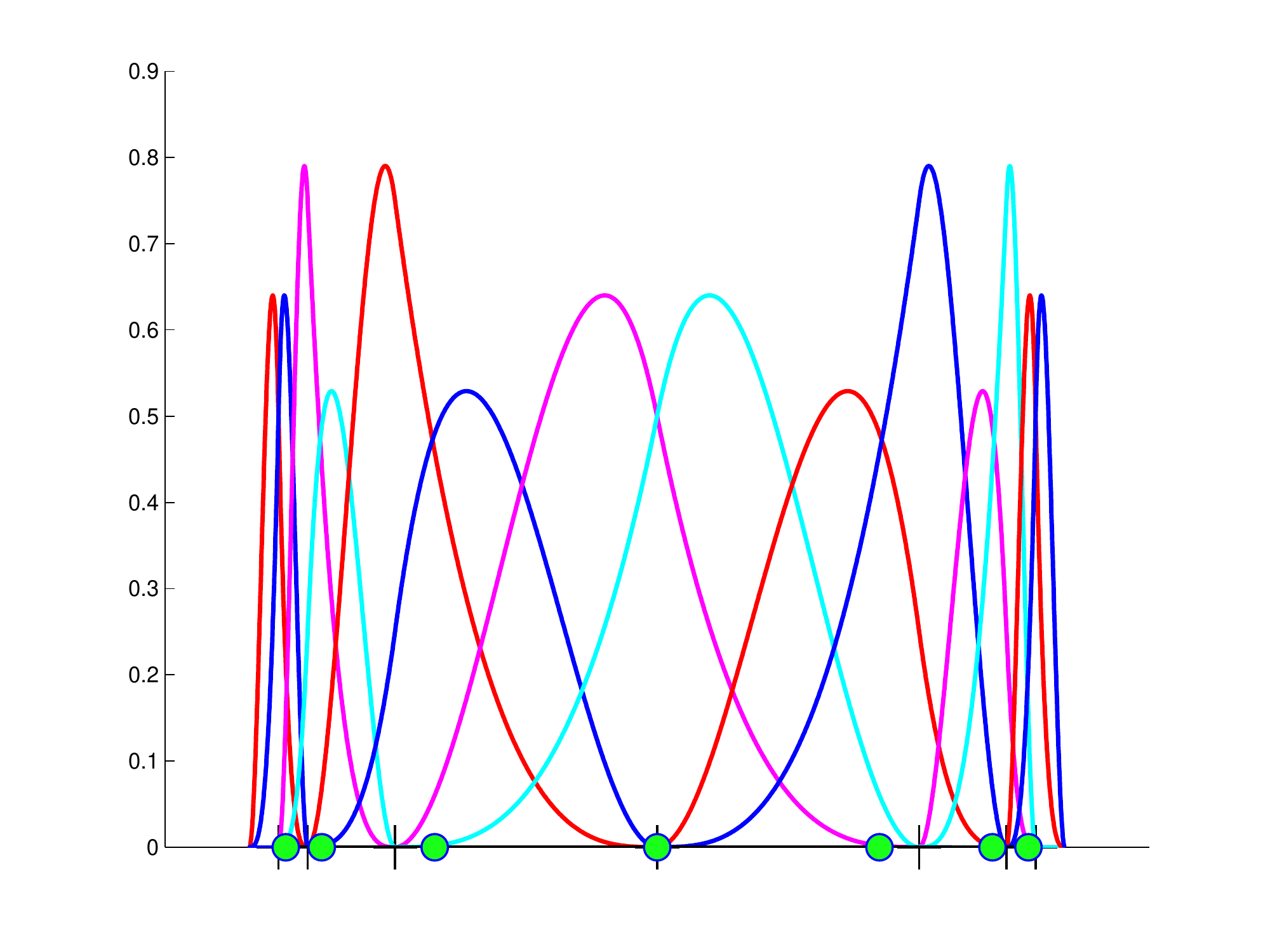}
    \put(38,55){\fcolorbox{gray}{white}{\includegraphics[width=0.115\textwidth]{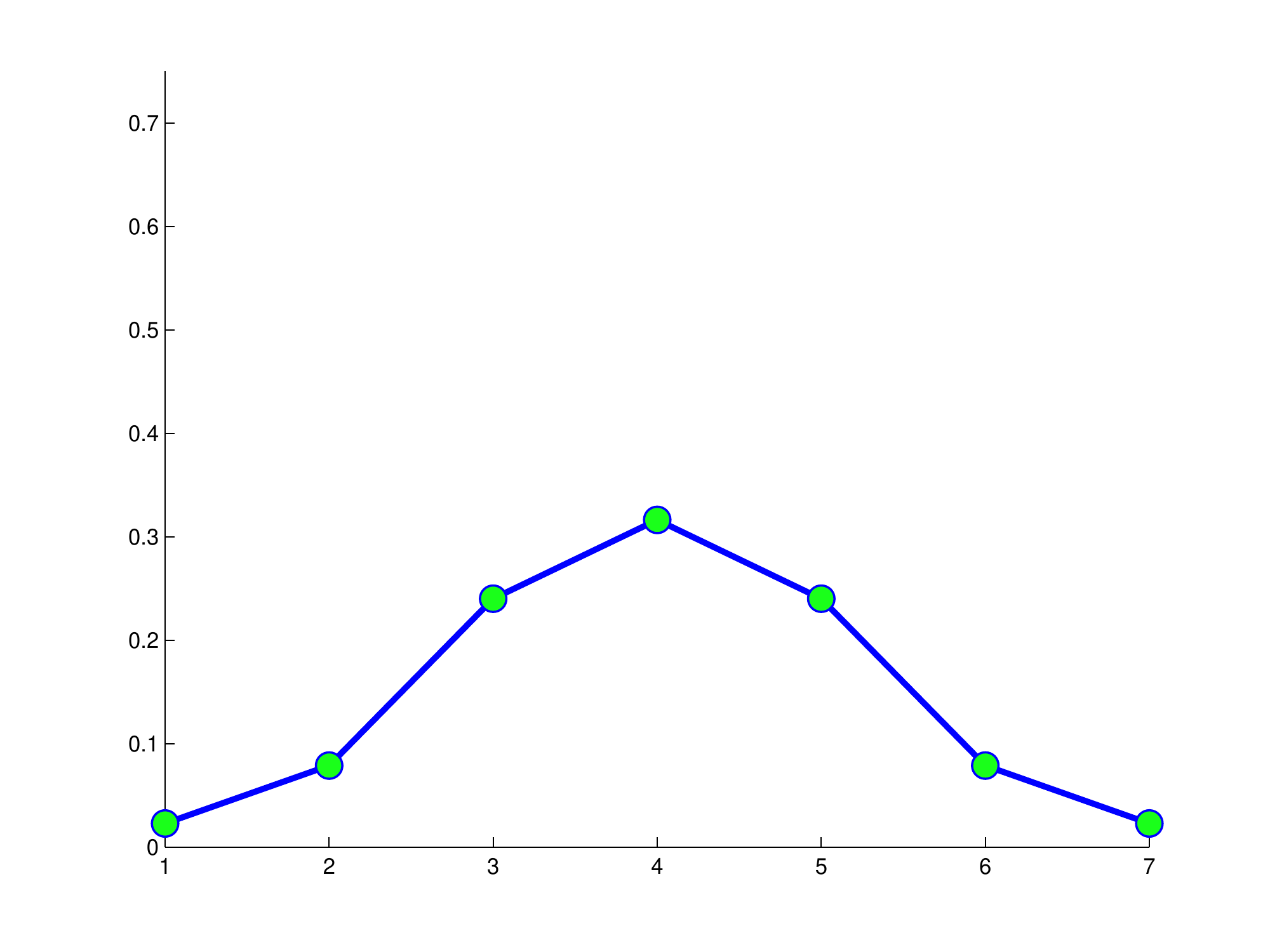}}}
    \put(15,65){$q=3$}
	\end{overpic}
 \hfill
 \vspace{-4pt}
 \Acaption{1ex}{For geometric knot sequences, the length of neighboring subintervals growth geometrically, i.e.
  $x_{k+1} - x_k = q (x_{k} - x_{k-1})$. The basis functions for a fixed number of internal knots ($N=5$) with various $q$ are shown.
  The green dots indicate the quadrature nodes and the top snapshot windows display their corresponding weights.}\label{fig:GeomN=5}
 \end{figure}

One such a prominent symmetrically stretched knot sequence stems from Chebyshev polynomials \cite{Gautschi-1997},
where its degree $N$ determines the roots which can be written as
\begin{equation}
x_k = - \cos(\phi_k), \quad \phi_k = \frac{2k-1}{2N}\pi, \quad k =1,2,\dots,N
\end{equation}
and the roots, according to Def.~\ref{def:stretch}, obviously form a non-uniform stretched knot
sequence on $[-1,1]$. The corresponding nodes and weights for $n-1=N=5$ are shown in Fig.~\ref{fig:BasisChebyN=5}.
Similarly, Legendre polynomials \cite{Szego-1936} satisfy the requirement that their roots form a
symmetrically stretched sequence. In order to have a qualitative comparison of the weights for
Chebyshev and Legendre knot sequences, and also for the comparison of their Peano kernels,
see Fig.~\ref{fig:PK}, the roots of Chebyshev polynomial were mapped to the unit domain.

Another family of symmetrically stretched knot sequences are those where the lengths of two neighboring knots form
a geometric sequence, i.e. the stretching ratio $q$ is constant, see Fig.~\ref{fig:GeomN=5}.
Obviously, the quadrature rule of Nikolov \cite{Nikolov-1996} is a special case for $q=1$. In some applications such as
solving the $1D$ heat equation \cite{Veerapaneni-2007} or simulating turbulent flows in 3D \cite{Bazilevs-2007,Bazilevs-2010},
where the finer and finer subdivisions closer to the domain boundary are needed, the uniform rule would eventually require large number
of knots whilst setting a proper non-uniform knot sequence could reduce the number of evaluations significantly.
The Peano kernels of geometric knot sequences considered as a function of the stretching ratio $q$
are shown in Fig.~\ref{fig:PKSrf}. It is not surprising, rather an expected result that the error constant $c_{n+1,4}$
looks favorably for the uniform knot sequence as the uniform layout is a certain equilibrium, that is,
a minimizer of the first term on the left side in (\ref{eq:c}).

We emphasize that these three types of non-uniform knot sequences are particular examples,
one can use any knot sequence satisfying Def.~\ref{def:stretch} that is suitable for a concrete application.
In all the numerical examples shown in the paper, we observed a similar phenomenon as in \cite{Nikolov-1996}, namely
that the weights are monotonically increasing when coming from the side to the middle of the interval, see Table~\ref{tabW}.
However, the proof for non-uniform knot sequences turned out to be rather difficult and we content ourselves here to
formulate it as an open problem, namely {\it{the quadrature nodes and weights computed in Theorem~\ref{theo:quad},
satisfy the inequalities}}
\begin{equation*}
\omega_i < \theta_i < \omega_{i+1} \quad \textnormal{for} \quad i=1,\dots, [\frac{n}{2}].
\end{equation*}

\begin{figure}[tbh!]
\hfill
 \begin{overpic}[width=.49\columnwidth,angle=0]{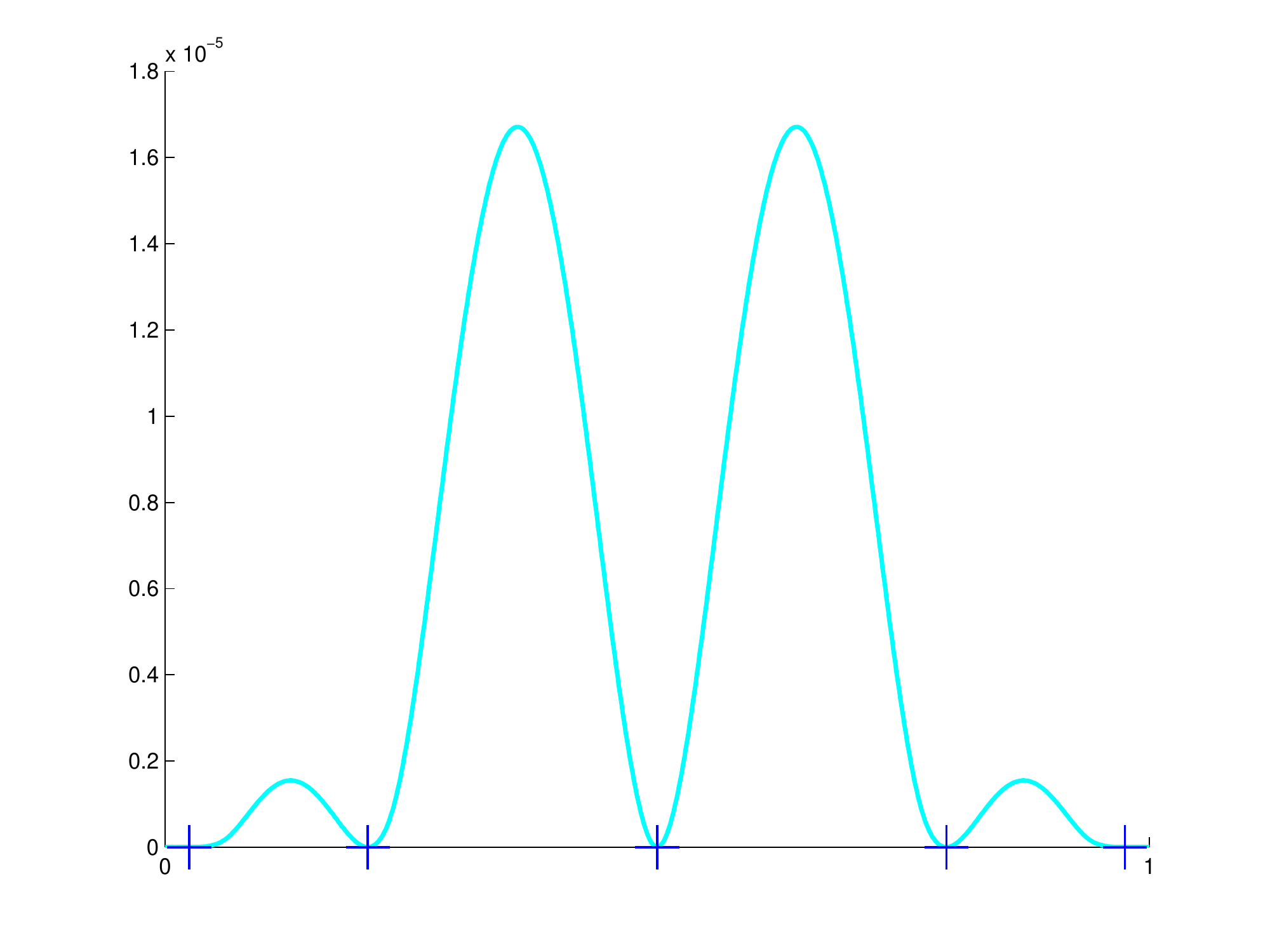}
    \put(13,2){$x_1$}
    \put(50,2){$x_3$}
    \put(85,2){$x_5$}
    \put(90,50){$N=5$}
    \put(40,70){\small Chebyshev}
	\end{overpic}
 \hfill
 \begin{overpic}[width=.49\columnwidth,angle=0]{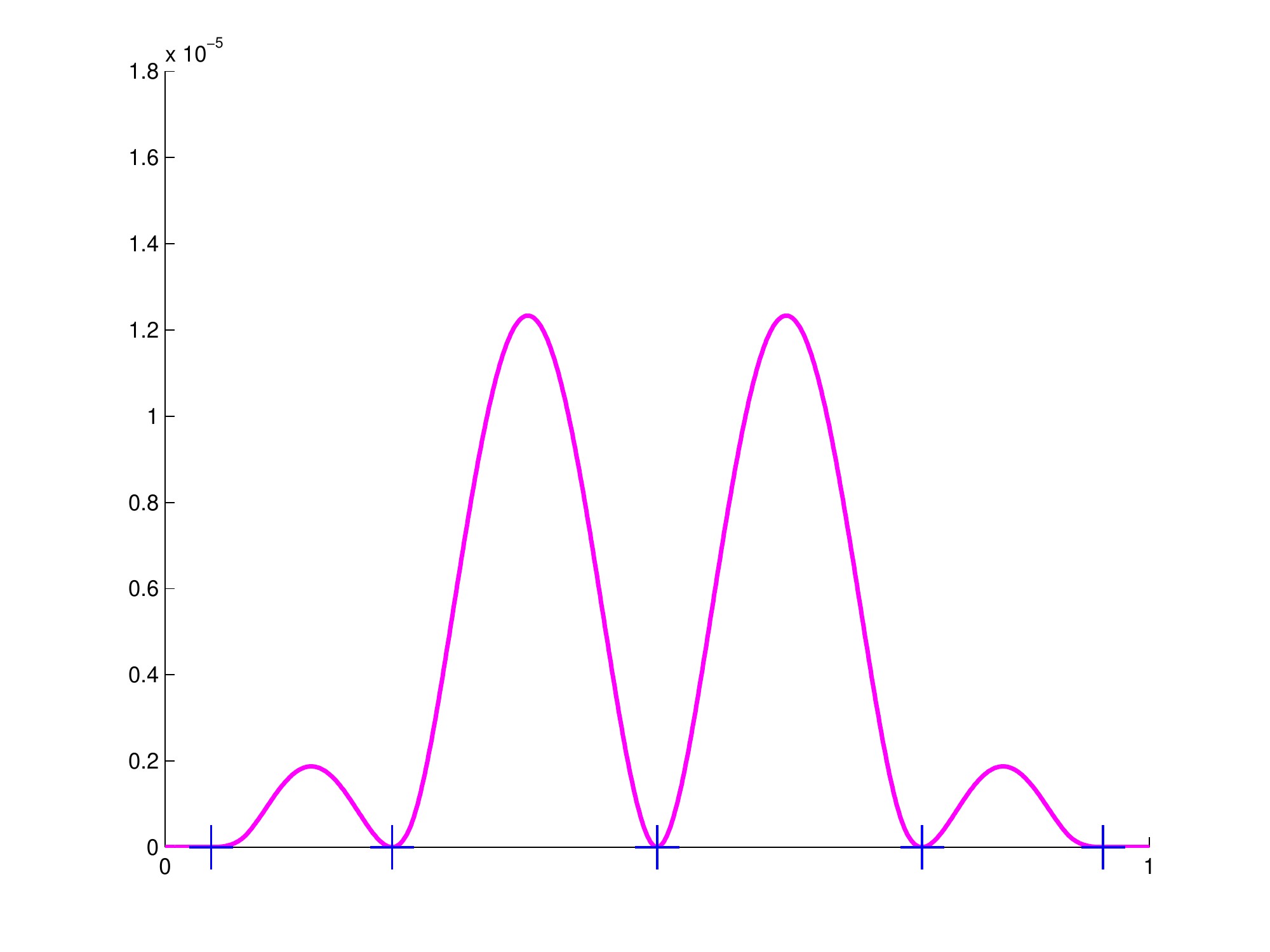}
    \put(40,70){\small Legendre}
	\end{overpic}
 \hfill
\vspace{-0.01cm}
 \hfill
 \begin{overpic}[width=.49\columnwidth,angle=0]{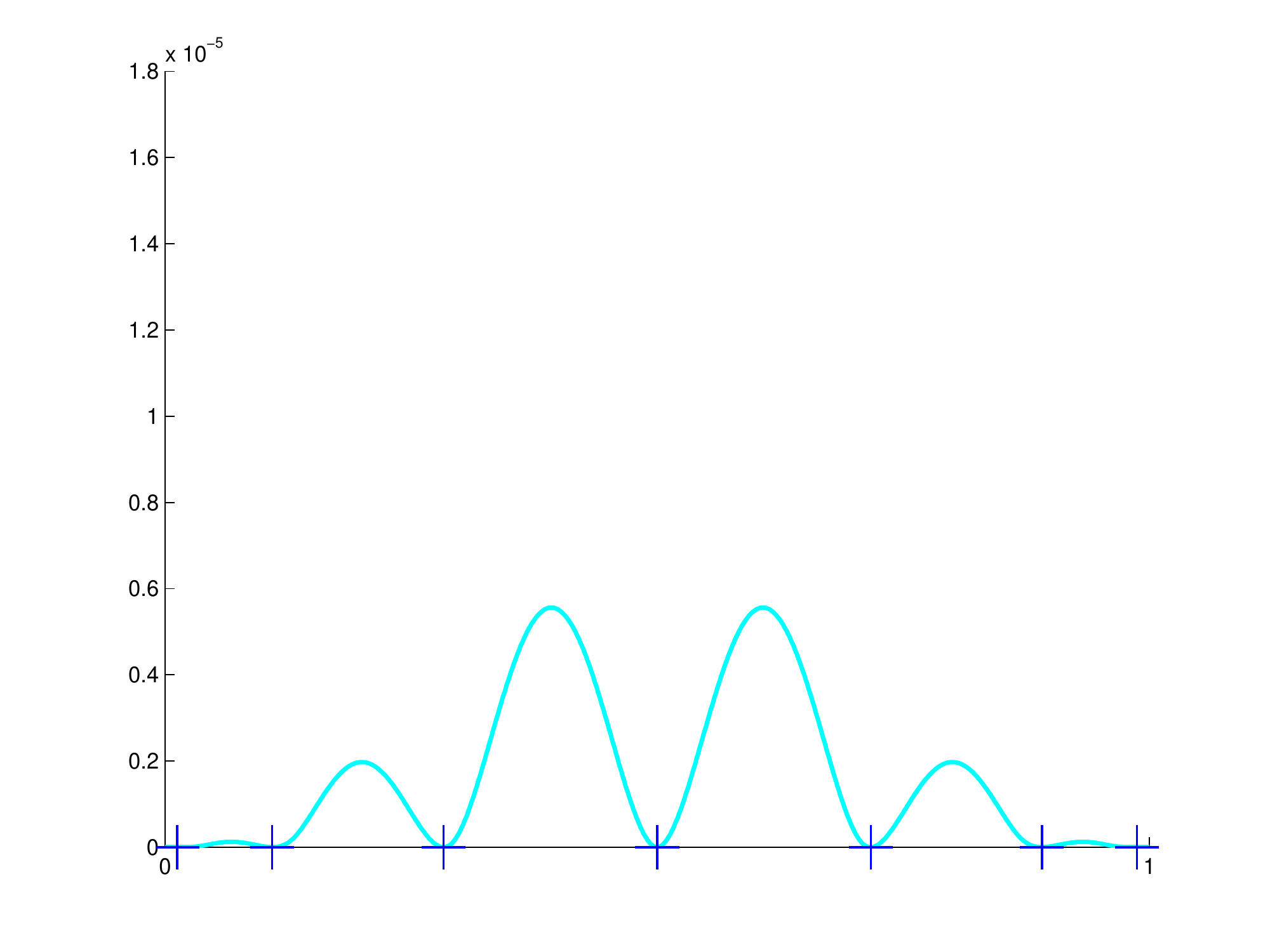}
    \put(90,50){$N=7$}
	\end{overpic}
 \hfill
 \begin{overpic}[width=.49\columnwidth,angle=0]{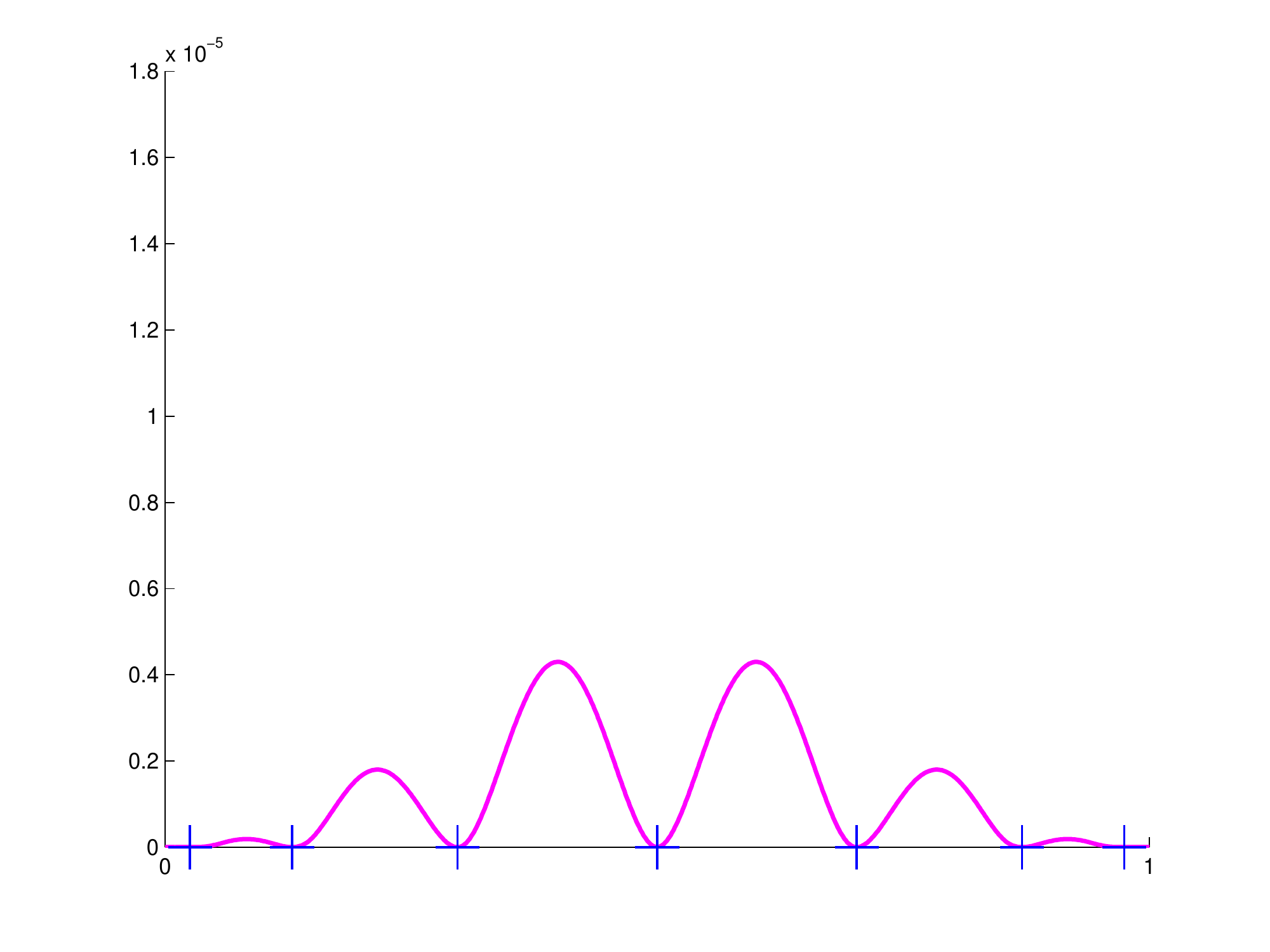}
	\end{overpic}
 \hfill
 \vspace{-4pt}
 \Acaption{1ex}{Peano kernels representing the constant $c_{n+1,4}$, see Eq.~(\ref{remainder}),
 for Chebyshev and Legendre knot sequences for $N=5$ and 7 on the unit domain are shown.}\label{fig:PK}
 \end{figure}

\begin{figure}[tbh!]
\hfill
 \begin{overpic}[width=.49\columnwidth,angle=0]{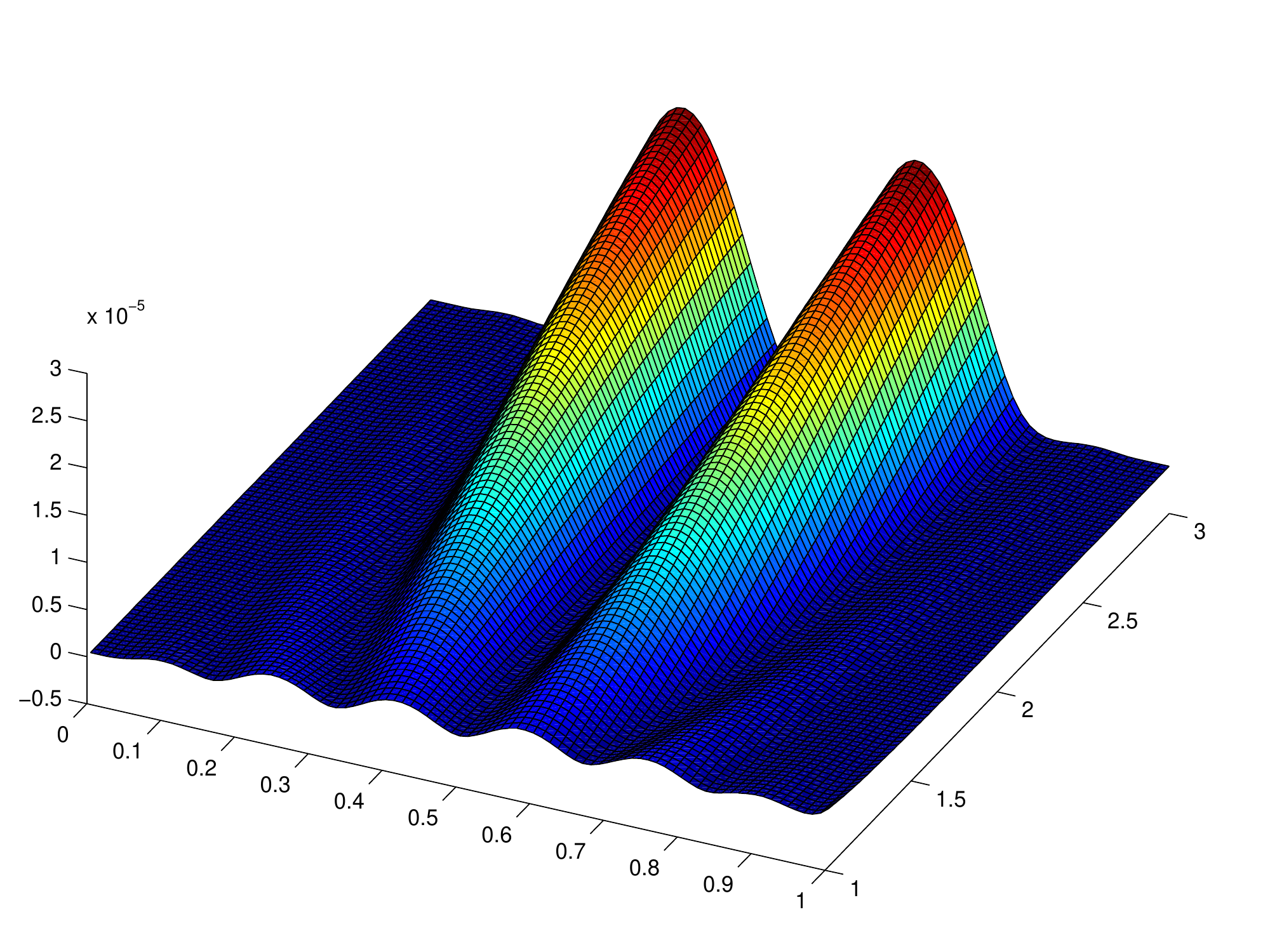}
    \put(10,55){$N=5$}
    \put(2,12){$a$}
    \put(60,-2){$b$}
    \put(70,2){$q=1$}
    \put(85,17){$q$}
    \put(90,65){\small Geometric}
	\end{overpic}
 \hfill
 \begin{overpic}[width=.49\columnwidth,angle=0]{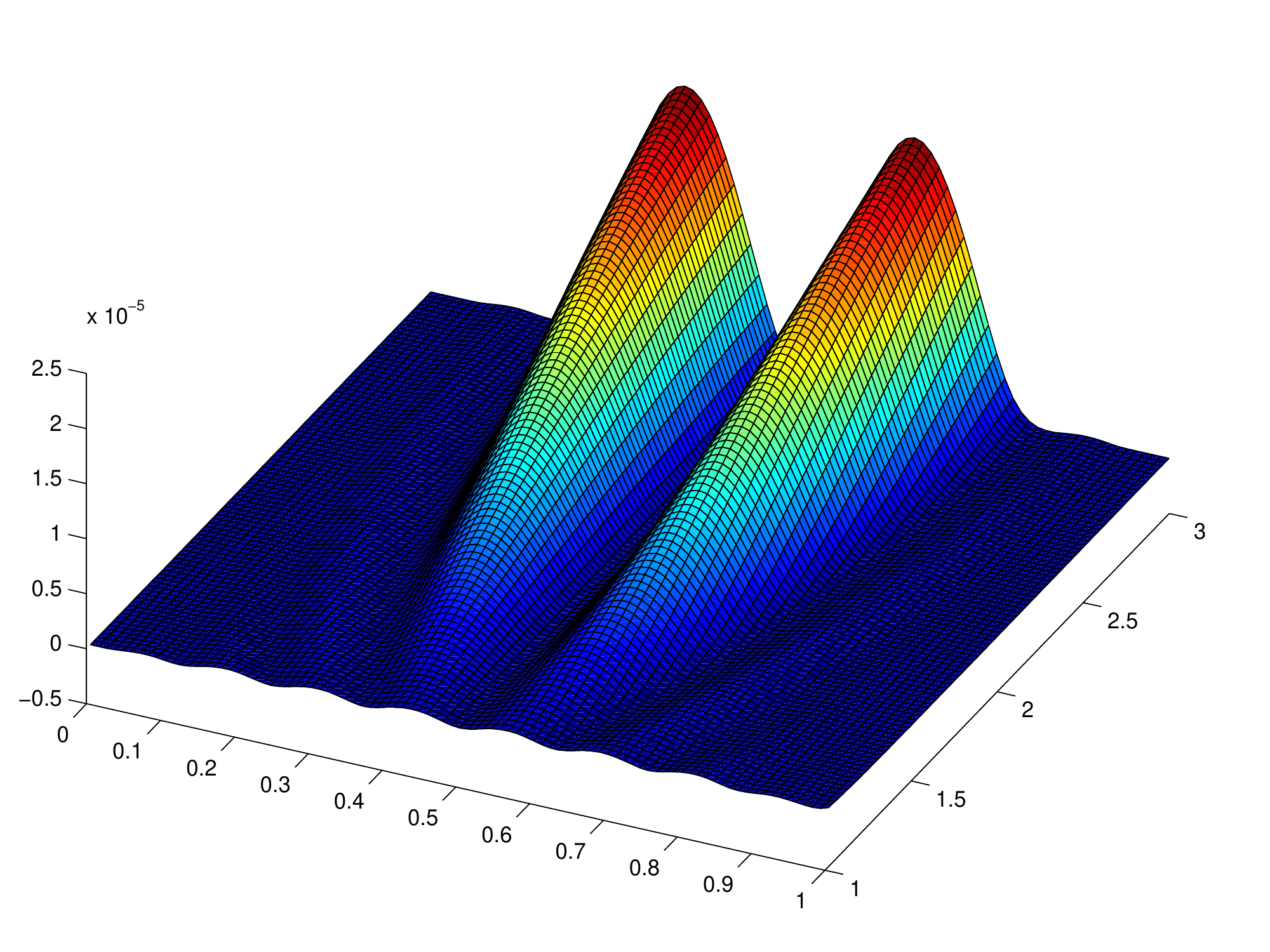}
    \put(10,55){$N=7$}
    \put(2,12){$a$}
    \put(60,-2){$b$}
    \put(70,2){$q=1$}
    \put(85,17){$q$}
	\end{overpic}
 \hfill
 \vspace{-4pt}
 \Acaption{1ex}{Peano kernels of a geometric knot sequence with $N=5$ and $7$ internal knots as a function of the scaling ratio $q$
 are shown. The cut by $q = const.$ plane is the corresponding univariate Peano kernel and its integral represents the error constant $c_{n+1,4}$,
 see (\ref{remainder}). The front boundary curve ($q=1$) is the Peano kernel
 associated to the uniform knot sequence.}\label{fig:PKSrf}
 \end{figure}

\section{Conclusion and future work}\label{sec:con} %

We have derived a quadrature rule for spaces of $C^1$ cubic splines with symmetrically
stretched knot sequences.
The rule possesses three crucial properties: We can exactly integrate the
functions from the space of interest; the rule requires minimal number of evaluations; and
the rule is defined in closed form, that is, we give explicit formulae without
need of any numerical algorithm.
To the best knowledge of the authors, the result is the first of the kind that handles
non-uniform knot sequences explicitly and, even though the symmetrical stretching seems
to be relatively restrictive, we believe that the infinite dimensional space of possible
knot sequences where the rule applies makes it a useful tool in many engineering applications.

Moreover, our quadrature rule is still exact, even though not optimal, for $C^2$ cubic splines.
Due to its explicitness,
it can also be freely used in various applications instead of $(3,2)$ splines quadrature rules,
for which the explicit formulae are not known.
In the future, we intent to derive quadrature rules for other spline spaces,
while aiming at particular engineering application.

\section*{Acknowledgments}
The research of the first author was supported by the KAUST Visual Computing Center.


\bibliographystyle{plain}
\bibliography{CubicC1QuadratureArXiv}

\end{document}